\pgfplotsset{compat=1.15}
\newcolumntype{L}{>{$}l<{$}} 
\theoremstyle{plain}
\newtheorem{theorem}{Theorem}[section]
\newtheorem{proposition}[theorem]{Proposition}
\newtheorem{corollary}[theorem]{Corollary}
\newtheorem{lemma}[theorem]{Lemma}
\newtheorem{conjecture}[theorem]{Conjecture}
\newtheorem*{theorem*}{Theorem}
\newtheorem*{proposition*}{Proposition}
\newtheorem*{corollary*}{Corollary}
\newtheorem*{lemma*}{Lemma}
\newtheorem*{conjecture*}{Conjecture}
\newtheorem*{thmA}{Theorem A}
\newtheorem*{thmB}{Theorem B}
\newtheorem*{thmC}{Theorem C}
\theoremstyle{definition}
\newtheorem{definition}[theorem]{Definition}
\newtheorem{example}[theorem]{Example}
\newtheorem*{definition*}{Definition}
\newtheorem*{example*}{Example}
\theoremstyle{remark}
\newtheorem{remark}[theorem]{Remark}
\newtheorem*{remark*}{Remark}
\newcommand{\G}{\operatorname{G}}
\newcommand{\bP}{\mathbb{P}}
\newcommand{\bL}{\mathbb{L}}
\newcommand{\cE}{\mathcal{E}}
\newcommand{\cF}{\mathcal{F}}
\newcommand{\cS}{\mathcal{S}}
\newcommand{\cG}{\mathcal{G}}
\newcommand{\cU}{\mathcal{U}}
\newcommand{\cX}{\mathcal{X}}
\newcommand{\cW}{\mathcal{W}}
\newcommand{\tF}{\tilde{F}}
\newcommand{\Pic}{\operatorname{Pic}}
\renewcommand\bar\overline
\renewcommand\tilde\widetilde
\newcommand{\Sym}{\operatorname{Sym}}
\renewcommand{\geq}{\geqslant}
\renewcommand{\leq}{\leqslant}
\newcommand{\PP}{\bP}
\newcommand{\OO}{\mathcal{O}}
\title{The Fermat cubic and monodromy of lines}
\author{Frank Gounelas}
\address{Georg-August-Universit\"at G\"ottingen, Fakult\"at f\"ur Mathematik und Informatik, Bunsenstr. 3-5, 37073
G\"ottingen, Germany}
\email{gounelas@mathematik.uni-goettingen.de}
\author{Alexis Kouvidakis}
\address{Dept. of Mathematics and Applied Mathematics, University of Crete, 70013 Heraklion, Greece.}
\email{kouvid@uoc.gr}
\date{\today}
\subjclass[2020]{14J70, 14J35, 14J30, 11D41}
\begin{document}

\maketitle
\begin{abstract}
In this paper we study properties of the locus of second type lines of a general cubic threefold and fourfold. By
analysing the geometry of the Fano scheme of lines of the Fermat cubic fourfold and in particular giving an explicit
description of the locus of second type lines, we deduce that the Voisin map is birational over the second type locus.
For a general cubic threefold, by studying properties of the second type locus again, we compute that various natural
geometric monodromy groups are the full symmetric group.
\end{abstract}

\setcounter{tocdepth}{1}
\tableofcontents

\section{Introduction}\label{sec:introduction}

For a smooth cubic hypersurface $X\subset\PP^{n+1}$ one defines the Fano scheme of lines
$F(X)\subset\mathrm{G}(2,n+2)$ parametrising lines contained in $X$. This is a smooth and irreducible variety with rich
geometry and history, in particular in the low dimensional cases where, e.g., for $n=2$ we famously have that $F(X)$
consists of 27 smooth points, for $n=3$ we have that $\mathrm{Alb}(F(X))$ is isomorphic to the intermediate Jacobian
$J(X)$ used by Clemens--Griffiths to conclude that $X$ is not rational, and for $n=4$ we have that $F(X)$ is an
irreducible holomorphic symplectic variety deformation equivalent to $S^{[2]}$, the Hilbert scheme of two points on a K3
surface and conjectures abound relating the geometry of $F(X)$ and rationality of $X$.

One may also consider the locus $F_2(X)\subset F(X)$ of lines of second type, roughly consisting of lines with too many
tangent 2-planes. The universal families $\bL, \bL_2$ over $F(X), F_2(X)$ provide useful correspondences, relating
the Chow and cohomology theories of $X$ and $F(X), F_2(X)$. The geometry and properties of locus $F_2(X)$ (which is
non-empty only if $n\geq3$ and is smooth and irreducible if $X$ is general), though intimately tied to the geometry of
$X$, is rather complicated and we refer to \cite{huybrechts} for basic results and motivation. When $n=3$, Fano proved
that $F_2(X)$ is a bicanonical curve in the surface $F(X)$, which is smooth and irreducible and of genus 91 if the cubic
is general. In the case where $n=4$, only recently have its invariants become known (see \cite{huybrechts,
cubicfourfolds2}). 

The first aim of this paper is to obtain a description (see Proposition \ref{prop:fermatdegW}) of the locus $F_2(X)$ in
the case where $X$ is the Fermat cubic fourfold.

\begin{thmA}
    Let $X=V(\sum_{i=0}^5x_i^3)\subset\PP^5$ be the Fermat cubic. The locus of second type lines $F_2\subset F$ is
    isomorphic to the union of 10 copies of the self-product of the Fermat cubic curve and 45 copies of the Fermat cubic
    surface.
\end{thmA}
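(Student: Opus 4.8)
The plan is to turn the second-type condition into an elementary statement about six points on $\PP^1$ and then read off the classification directly from the Fermat equations. Recall the local characterisation of the second-type locus: $\ell\subset X$ is of second type exactly when the splitting type of $N_{\ell/X}$ jumps, equivalently (see \cite{huybrechts}) when the Gauss map $\gamma|_\ell\colon\ell\to(\PP^5)^*$, $p\mapsto T_pX$, is degenerate, i.e.\ its image is a line rather than a smooth conic. Parametrise $\ell$ as the image of $[s:t]\mapsto s\mathbf a+t\mathbf b$ for two points $\mathbf a=(a_0,\dots,a_5)$, $\mathbf b=(b_0,\dots,b_5)$ spanning it. Since $\nabla F=(3x_0^2,\dots,3x_5^2)$, the $i$-th partial restricts on $\ell$ to the binary quadric $q_i=a_i^2s^2+2a_ib_i\,st+b_i^2t^2$, so $\gamma|_\ell=[q_0:\dots:q_5]$ has image contained in the projectivised column space of the $6\times3$ matrix $M(\ell)=\big[\,(a_i^2)\mid(2a_ib_i)\mid(b_i^2)\,\big]$. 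This image is a conic iff $\operatorname{rk}M(\ell)=3$ and a line iff $\operatorname{rk}M(\ell)\le2$, so $\ell$ is of second type precisely when $\operatorname{rk}M(\ell)\le2$.

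The key observation is that the $i$-th row of $M(\ell)$ is the image of $[a_i:b_i]\in\PP^1$ under the degree-two Veronese $\PP^1\hookrightarrow\PP^2$. Hence $\operatorname{rk}M(\ell)\le2$ iff some nonzero binary quadric vanishes at all six points $v_i:=[a_i:b_i]$, and since such a quadric has at most two zeros this holds iff the $v_i$ take at most two distinct values in $\PP^1$; this condition is invariant under reparametrisation of $\ell$, as it must be. As all $v_i$ equal would make $\mathbf a,\mathbf b$ proportional (not a line), the second-type lines are exactly those whose coordinate-points $v_i$ take exactly two values.

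Next I would combine this with the incidence equations $\sum a_i^3=\sum a_i^2b_i=\sum a_ib_i^2=\sum b_i^3=0$ expressing $\ell\subset X$. Given two values $P\ne Q$, reparametrise so that $P=[1:0]$ and $Q=[0:1]$; then $\mathbf a$ is supported on $I=\{i:v_i=P\}$ and $\mathbf b$ on $J=\{j:v_j=Q\}$, the two mixed equations hold automatically, and the rest become $\sum_{i\in I}a_i^3=0$ and $\sum_{j\in J}b_j^3=0$. Thus $\ell$ corresponds to a point of the Fermat hypersurface in $\PP(I)$ together with one in $\PP(J)$. A part of size $1$ forces that coordinate to vanish (no line); size $2$ gives the three points $a_i=\zeta a_j$ with $\zeta^3=-1$; size $3$ gives the Fermat cubic curve $E\subset\PP^2$; size $4$ gives the Fermat cubic surface $S\subset\PP^3$. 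Since $|I|+|J|=6$ with both parts $\ge2$, only the splittings $3+3$ and $2+4$ occur: the $3+3$ splittings contribute $E\times E$, one for each of the $\binom{6}{3}/2=10$ partitions, while the $2+4$ splittings contribute $\{3\text{ points}\}\times S$, i.e.\ three copies of $S$ for each of the $\binom{6}{2}=15$ choices of the two-element set, giving $45$ copies of $S$; the explicit parametrisations show these maps are injective, hence isomorphisms onto their images.

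The remaining and, I expect, most delicate step is upgrading this set-theoretic description to the stated isomorphism. I would check that the degenerate lines (those with some $(a_i,b_i)=0$) already lie in the families above, that each family is reduced and is exactly an irreducible component of $F_2$, and that the pairwise intersections are as needed to present $F_2$ as their union. The natural tool is to write explicit local coordinates on $F$ near a general point of each family and verify that the rank-$\le2$ locus is cut out there with reduced structure of the expected dimension $2=\dim F_2$; the parametrisations of the previous paragraph then furnish the desired isomorphisms with $E\times E$ and with $S$.
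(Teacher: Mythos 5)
Your argument is correct and reaches the stated classification by a genuinely different route from the paper's. You classify the second type lines directly: writing the restricted Gauss map as $M(\ell)\cdot(s^2,st,t^2)^T$ and noting that the rows of $M(\ell)$ are Veronese images of the six points $[a_i:b_i]\in\PP^1$, the second-type condition becomes the combinatorial statement that these points take at most two values; the incidence equations then leave only the $3+3$ and $2+4$ partitions of the coordinate set, yielding the $10$ products $C_{i,j,k}\times C'_{i,j,k}$ and the $45=3\cdot 15$ copies of the Fermat cubic surface. The paper (Proposition \ref{prop:fermatdegW}) argues in the opposite direction: it only verifies the containment of these families in $S$, by exhibiting for each candidate line an explicit antipodal involution for the Gauss map, and then closes the argument by a degree count --- $(p_*q^*[S]).H_X^3=225$ equals the total degree of the $25$ divisors $V_{i,j}, V_{i,j,k}$, forcing $W$ to be exactly their union, and purity of $S$ together with generic finiteness of $q^{-1}(S)\to W$ on each component then forces $S$ to be exactly the $55$ families. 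Your route is more self-contained (no intersection-theoretic input) and sees every second type line at once, including those lying on several components; the paper's computation is not redundant, though, since $\OO_X(W)=\OO_X(75)$ and the generic injectivity of $p$ on $q^{-1}(S)$ are reused later. The two steps you defer do go through: the degenerate lines with some $(a_i,b_i)=(0,0)$ are exactly lines joining an Eckardt point $p_{i,j}^{(\mu)}$ to a point of $G_{i,j}$ with extra vanishing coordinates (or joining two Eckardt points), hence already lie in the listed families; and for the scheme-theoretic claim you can invoke the reducedness of $F_2$ from Proposition \ref{prop:Wdiv} and the fact that your parametrisations are restrictions of the standard closed embeddings $\Pi_1\times\Pi_2\hookrightarrow\G(2,6)$ (lines meeting two disjoint planes) and $\PP^3\hookrightarrow\G(2,6)$ (lines through a fixed external point), which upgrades injectivity to an isomorphism onto the image.
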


A salient feature of $F_2(X)$ in the case $n=4$ is that it is contained in the locus of indeterminacy of the Voisin map
 
\begin{align*}
    \phi:F(X)&\dashrightarrow F(X) \\
    \ell & \mapsto \ell' 
\end{align*}
where there exists a $\PP^2\cong\Pi\subset\PP^5$ such that $X\cap\Pi=2\ell+\ell'$. In fact, the indeterminacy locus is
equal $F_2(X)$ if $X$ contains no 2-planes, and $\phi$ is resolved by a single blowup.  We obtain the following (see
Proposition \ref{prop:thmB}) as an application of the above analysis by degenerating to the Fermat cubic fourfold.

\begin{thmB}
        Let $X\subset\PP^5$ be a general cubic fourfold and $E_{F_2}\subset \mathrm{Bl}_{F_2} F$ the exceptional
        divisor. Then the Voisin map $\phi: E_{F_2}\to F$ is birational onto its image, a divisor $D\subset F$.
\end{thmB}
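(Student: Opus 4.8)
The plan is to reduce the statement to a degree-one count and then to carry out that count by specialising to the Fermat cubic, where Theorem A provides an explicit model of $F_2$. Since $E_{F_2}$ is a $\PP^1$-bundle over the surface $F_2$ it is a threefold, and $D\subset F$ is a divisor in the fourfold $F$, so $\phi|_{E_{F_2}}\colon E_{F_2}\to D$ is generically finite and birationality is equivalent to this map having degree one. First I would make the map completely explicit. Fixing a line $\ell\subset X$ with coordinates $x_0,x_1$ and writing a plane $\Pi\supset\ell$ as $\langle\ell,p\rangle$ with $p=[0{:}0{:}a_2{:}\cdots{:}a_5]$, the restricted cubic form factors as $g=t\cdot q(x_0,x_1,t)$ with $\ell=\{t=0\}$, and one checks that $\ell$ is of second type precisely when the linear map $a\mapsto q(x_0,x_1,0)$ drops rank, so that the tangent planes realising $X\cap\Pi=2\ell+\ell'$ form the pencil $E_{F_2}|_\ell\cong\PP^1$. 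For each such $\Pi$ the residual line is $\ell'=\{f_2(x_0,x_1)+f_3\,t=0\}$, where $f_2,f_3$ are the $t^2$- and $t^3$-coefficients of $g$; this is the explicit formula for $\phi|_{E_{F_2}}$.

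The key reformulation of the fibre is as follows. If $(\ell,\Pi)\mapsto\ell'$, then $\ell,\ell'\subset\Pi$, so the two lines meet and $\Pi=\langle\ell,\ell'\rangle$ is recovered from the pair $(\ell,\ell')$. Hence the fibre of $\phi|_{E_{F_2}}$ over a general $\ell'\in D$ is identified with the set of second-type lines $\ell$ meeting $\ell'$ for which $X\cap\langle\ell,\ell'\rangle=2\ell+\ell'$, and the theorem amounts to showing there is exactly one such $\ell$. I would now degenerate a general (planeless) cubic fourfold to the Fermat cubic $X_0$ in a one-parameter family, with the induced flat families of Fano schemes, second-type loci, blow-ups and Voisin maps. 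Using the description of $F_2(X_0)$ from Theorem A as $10$ copies of $E\times E$ (the self-product of the Fermat cubic curve) and $45$ Fermat cubic surfaces, I would parametrise the second-type lines on each component, write down the pencil of tangent planes and the residual line via the explicit formula above, and count the preimages of a general point of the image. Matching this count against the incidence reformulation should yield a single reduced preimage, giving degree one.

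The main obstacle is that the Fermat cubic is not planeless: it contains $2$-planes, so its indeterminacy locus is strictly larger than $F_2(X_0)$ and the limiting image can acquire extra components coming from these planes. To run the specialisation argument correctly I would first isolate the component of the central image $D_0$ that is the flat limit of the divisors $D_t$, check that the explicitly computed map is finite onto it over a neighbourhood of a general point, and verify that the preimages computed on the $E\times E$- and surface-components neither collide with nor escape into the locus contributed by the $2$-planes. Granting this, conservation of degree under specialisation (the map being proper with finite fibres over the chosen open set, with reduced central image of the correct dimension) transfers the degree-one count from $X_0$ to the general fibre; combined with the constancy of the generic degree over the irreducible family of smooth cubic fourfolds, this proves that $\phi\colon E_{F_2}\to F$ is birational onto the divisor $D$ for general $X$. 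Establishing the finiteness and the non-interference with the $2$-plane locus is where the real work lies.
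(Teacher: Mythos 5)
Your overall strategy---specialise to the Fermat, use the explicit description of $F_2$ there, count preimages, and transfer to the general cubic---is the same as the paper's, and your Fermat-side analysis matches what the paper establishes in Lemmas \ref{lem:imagephiSij}--\ref{lem:imSijk}: the components $E_{S_{i,j}^{(\mu)}}$ are contracted, each $E_{S_{i,j,k}}$ maps generically one-to-one, and the ten images are pairwise distinct, so a general point of the image has exactly one preimage. Your concern about the $2$-planes on the Fermat is legitimate; the paper sidesteps it by replacing $\mathrm{Bl}_{F_2}F$ with the space $\tilde{\cF}_X$ of pairs $(\ell,\Pi)$ with $\Pi$ tangent along $\ell$, which Lemma \ref{lem:tFXirred} shows is irreducible and birational to $F$ for \emph{every} smooth cubic, so no isolation of components of the central image is needed.

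The genuine gap is in your transfer step. ``Conservation of degree under specialisation'' does not hold in the direction you need: if $\phi_t|_{E_{S_t}}$ has degree $d$ for general $t$, the $d$ preimages of a general point of $D_t$ can collide as $t\to 0$, so a set-theoretic count of one preimage on the central fibre gives, via upper semicontinuity of fibre length for a finite morphism, only $\operatorname{length}\bigl(\phi^{-1}(y_0)\bigr)\geq d$ --- the wrong inequality, unless you also prove that the scheme-theoretic fibre over your chosen central point $y_0$ is a single \emph{reduced} point, i.e.\ that $\phi$ is unramified at its unique preimage. You do write ``a single reduced preimage'', but nothing in your plan produces that reducedness (the paper's own Lemma \ref{lem:imSijk} is purely set-theoretic), and the hypothesis you actually list, reducedness of the central \emph{image}, is not the relevant condition. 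The paper avoids this entirely by arguing with cycle classes: the count of $360$ residual lines through a general point of the Fermat (Proposition \ref{prop:360}) gives $[\phi(E_S)]=60H$ (Corollary \ref{cor:phiESFermat}); conservation of numbers applied to the family of scheme-theoretic images then forces $[D_t]$ to have the same intersection numbers, hence $[D_t]=60H$ for general $t$; and since $\phi_*[E_{S_t}]=60H$ by \cite[Remark 6.4.19]{huybrechts} while $\phi_*[E_{S_t}]=\deg(\phi|_{E_{S_t}})\cdot[D_t]$, the degree is one. To complete your route you must either supply the unramifiedness statement on the Fermat or replace the transfer step by this comparison of the image class with the pushforward class.
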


The image of $E_{F_2}$ in the second part of the above statement has received considerable attention in recent years as
it provides an explicit uniruled divisor in the hyperk\"ahler fourfold $F$ (see \cite{osy,charlesmongardipacienza}). 

In a similar vein, either by degenerating to the Fermat if $n=3,4$, or more generally for any $n\geq3$ using a different
degeneration (cf.\ \cite[Remark 3.8]{cubicfourfolds1}), we present a proof of the fact that the natural morphism
$\bL_2\to X$ is birational onto its image, a divisor $W\subset X$. As an application of this and some more refined
statements in the case $n=3$, we compute (see Section \ref{sec:monodromy3}) that the monodromy groups of various natural
finite morphisms are the full symmetric group, where in the following $C_\ell\subset F(X)$ is the locus of lines which
meet $\ell$.

\begin{thmC}
    Let $X\subset\PP^4$ be a general smooth cubic threefold.
    \begin{enumerate}
        \item The degree 6 morphism $\bL\to X$ has monodromy group $S_6$.
        \item For $\ell\subset X$ a general line, the monodromy of the natural degree 5 morphism
        $\pi_\ell:C_\ell\to\ell, [\ell']\mapsto \ell\cap\ell'$ is $S_5$.
        \item For $\ell\subset X$ a general line of second type, the monodromy of the natural degree 4 morphism
        $\pi_\ell:C_\ell\to\ell, [\ell']\mapsto \ell\cap\ell'$ is $S_4$.
    \end{enumerate}
\end{thmC}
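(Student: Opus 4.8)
The plan is to treat the three covers uniformly and to reduce (1) to (2), isolating the genuinely new difficulty in (3). Throughout I use two standard facts about the monodromy (geometric Galois) group $G$ of a dominant finite morphism $f\colon Y\to Z$ of degree $d$ between irreducible varieties: that $G$ is transitive exactly when $Y$ is irreducible, and (Jordan) that a primitive subgroup of $S_d$ containing a transposition is all of $S_d$. Transpositions will be produced from simple branch points: if $f$ has a branch point over which the fibre is $d-1$ reduced points (a single ordinary node), the local monodromy is a transposition.

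For (2), with $\ell\subset X$ a general line, the curve $C_\ell$ is irreducible by the standard incidence argument for a general line on a cubic threefold, so $G_5$ is transitive. Now $\ell$ meets the surface $W$ swept out by the second type lines, and over a general point $x_0$ of $W$ exactly two sheets of $\bL\to X$ collide in an ordinary node: by the birationality of $\bL_2\to W$ there is a unique second type line $m$ through $x_0$, and it is the limit of precisely two of the six lines through $x_0$. For general $\ell$ we have $m\neq\ell$, and since $m$ passes through $x_0\in\ell$ the two colliding lines both lie in $C_\ell$; hence $\pi_\ell\colon C_\ell\to\ell$ has a simple branch point and $G_5$ contains a transposition. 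As $5$ is prime, the transitive group $G_5$ is automatically primitive, so Jordan's theorem gives $G_5=S_5$.

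Statement (1) then follows formally. The family $\bL$ is a $\PP^1$-bundle over the irreducible Fano surface $F$, hence irreducible, so $G_6$ is transitive of degree $6$. Restricting the degree $6$ cover $\bL\to X$ over a general line $\ell$, the pullback $\bL\times_X\ell\to\ell$ carries the tautological section $x\mapsto(\ell,x)$, and removing it leaves exactly $\pi_\ell\colon C_\ell\to\ell$. Thus the image in $G_6$ of the monodromy of $C_\ell\to\ell$ fixes the sheet $\ell$ and acts on the remaining five sheets as $S_5$ by (2); in particular the stabiliser in $G_6$ of the sheet $\ell$ has order at least $120$. Combined with transitivity this yields $|G_6|\geq 6\cdot 120=720$, forcing $G_6=S_6$.

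The main obstacle is (3), where $\ell$ is a general second type line and $\pi_\ell\colon C_\ell\to\ell$ has degree $4$, the line $\ell$ being doubled in the fibres of $\bL\to X$ over its points. Transitivity again follows from irreducibility of $C_\ell$, which now needs a separate check for a general point of the genus $91$ curve $F_2$, and a simple branch point produces a transposition; but $4$ is not prime, so one must exclude the imprimitive possibilities, i.e. a $G_4$-invariant pairing of the four lines (the dihedral $D_4$). The delicate point is that the branch analysis of (2) fails here because $\ell\subset W$, so the branch points of $\pi_\ell$ must be read off from the intersections of $\ell$ with the residual surface $\overline{W\setminus\ell}$ together with the behaviour intrinsic to the second type line. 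I would establish primitivity either by exhibiting a point of $\ell$ over which three of the four lines collide, yielding a $3$-cycle so that $12\mid|G_4|$ and hence $G_4\supseteq A_4$, whence the transposition forces $G_4=S_4$; or by proving directly that the surface of pairs of distinct lines through a common point of $\ell$ is irreducible, which gives $2$-transitivity of $G_4$ and, with the transposition, again $S_4$. Controlling the coincidences of the four residual lines along a second type line, most cleanly by degenerating to the Fermat cubic threefold where the second type locus is explicit, is where the real work lies.
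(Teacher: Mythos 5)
Your treatment of parts (1) and (2) is correct. Part (2) is essentially the paper's argument (Proposition \ref{prop:gpcurve}): the transposition comes from the birationality of $\bL_2\to W$ (Theorem \ref{thm:t2bir}) producing a fibre of $\pi_\ell$ with a single node, transitivity from irreducibility of $C_\ell$, and primality of $5$ closes the argument. For part (1) you take a genuinely different route: the paper establishes $2$-transitivity by showing $\bL^{(2)}_U$ is irreducible (via flatness of $\bL^{(2)}_U\to F|_U$ and irreducibility of the fibres $C_\ell$, Lemma \ref{lem:montrans}) and produces a transposition directly from Theorem \ref{thm:t2bir}, whereas you restrict the sixfold cover over a general line $\ell$, split off the tautological section, identify the residual cover with $\pi_\ell$, and use $|G|\geq 6\cdot|S_5|$. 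Your reduction is more economical in that it recycles (2) and avoids the fibre-product argument; the paper's version is self-contained and does not need the (correct, but worth stating) compatibility $\mathrm{Mon}_{\pi_\ell}\subseteq\mathrm{Stab}_{G}([\ell])$ coming from $\ell\setminus(W\cap\ell)\subseteq X\setminus W$.

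Part (3) is where your proposal has a genuine gap: you correctly identify that for degree $4$ transitivity plus a transposition does not force $S_4$ (the dihedral group $D_4\subset S_4$ is transitive and contains the transpositions $(13)$ and $(24)$), and you sketch two ways to rule out imprimitivity (a $3$-cycle from a triple coincidence, or $2$-transitivity from irreducibility of the space of ordered pairs of distinct lines through a common point of $\ell$), but you do not carry out either, explicitly deferring ``the real work''. As it stands your argument only yields $\mathrm{Mon}_{\pi_\ell}\in\{D_4,S_4\}$. You should be aware that the paper's own proof of this part does not do more than you do: it invokes Lemma \ref{lem:ram3fold} for the transposition, irreducibility of $C_\ell$ for transitivity, and then asserts that ``$S_4$ is the only transitive subgroup of $S_4$ which contains a transposition'', which is false as stated because of $D_4$. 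So the obstruction you isolate is real and is not discharged by the paper either; completing the proof requires an additional input, e.g.\ showing that $C_\ell\times_\ell C_\ell$ minus the diagonal is irreducible for a general second type line $\ell$ (giving $2$-transitivity via Lemma \ref{lem:montrans}), or exhibiting a fibre of $\pi_\ell$ whose local monodromy is incompatible with the block decomposition into two pairs.
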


\textbf{Acknowledgements} We would like to thank Daniel Huybrechts for suggesting studying the monodromy problems for
cubic threefolds presented in Section \ref{sec:monodromy3}.

\section{Preliminaries}\label{sec:prelim}

Let $n\geq3$ and let $X\subset\PP^{n+1}$ be a smooth cubic $n$-fold, $F\subset\G(2,n+2)$ its Fano scheme of lines and
$F_2\subset F$ the locus of lines of second type.

Consider the induced morphism from the family of lines over $F$ and $F_2$ respectively 
\[
    \xymatrix{
        \bL_2\ar[d]\ar@{^{(}->}[r] & \bL\ar[r]^p\ar[d]^q & X \\
        F_2\ar@{^{(}->}[r]& F. &
}\]
for $\bL=\PP(\cU|_F)=\mathrm{Proj}(\Sym(\cU^\vee|_F))$ (i.e., one dimensional subspaces) where $\cU$ the universal bundle on
the Grassmannian $\G(2,n+2)$ and $\bL_2$ its restriction to $F_2$. We denote the scheme-theoretic image of the
morphism $\bL_2=q^{-1}(F_2)\to X$ by $W$. In other words, the locus $W$ is that spanned by second type lines. Recall
the following result, essentially going back to Clemens--Griffiths.

\begin{proposition}(\cite[\S 2.2]{huybrechts})\label{prop:Wdiv}
    Let $X\subset\PP^{n+1}$ be a smooth cubic hypersurface with $n>2$ and $F,\bL,\bL_2, W$ as above. Then 
    \begin{enumerate}
        \item $F_2$ is pure $n-2$-dimensional and reduced, and $W$ is reduced.
        \item The morphism $\bL_2\to W$ is generically finite and $W$ is a divisor, in particular of pure codimension one. 
        \item $\bL_2$ is the non-smooth locus of the morphism $p$.
        \item If $X$ is general, $F_2$ is smooth and irreducible and hence $W$ is irreducible. \label{prop:Wdiv4}
    \end{enumerate}
\end{proposition}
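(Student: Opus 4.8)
The plan is to build everything on the second fundamental form of $X$ along a line and to realise $F_2$ as an explicit degeneracy locus. Fix a line $\ell\subset X$ and choose coordinates so that $\ell=V(x_2,\dots,x_{n+1})$; writing the defining cubic as $f=\sum_{i\geq 2}x_i\,a_i(x_0,x_1)+(\text{higher order in }x_2,\dots,x_{n+1})$ with $a_i\in H^0(\ell,\mathcal O_\ell(2))$, one gets the normal sequence
\[
0\to N_{\ell/X}\to \mathcal O_\ell(1)^{\oplus n}\xrightarrow{(a_i)}\mathcal O_\ell(3)\to 0 .
\]
Smoothness of $X$ along $\ell$ is exactly the statement that the $a_i$ have no common zero, and a short argument about a base-point-free pencil of binary quadrics shows the only possible splitting types are $N_{\ell/X}\cong\mathcal O(1)^{\oplus(n-3)}\oplus\mathcal O^{\oplus2}$ (first type) and $\mathcal O(1)^{\oplus(n-2)}\oplus\mathcal O(-1)$ (second type), the dichotomy being governed by whether the $a_i$ span all of $H^0(\mathcal O_\ell(2))$ or only a base-point-free pencil. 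Globalising the $a_i$ over $F$ yields a bundle map $\Phi:\cQ\to\Sym^2\cU^\vee$ of ranks $n$ and $3$ (the second fundamental form), and the above shows set-theoretically $F_2=\{\,[\ell]:\rk\Phi_\ell\leq 2\,\}$, the first degeneracy locus of $\Phi$.

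Part (3) is the conceptual core, and I would establish it first. Over a point $(\ell,x)\in\bL$ the differential $dp$ has image $T_x\ell+\operatorname{ev}_x\!\big(H^0(N_{\ell/X})\big)$, so $p$ is smooth at $(\ell,x)$ iff the evaluation $H^0(N_{\ell/X})\to (N_{\ell/X})_x$ is surjective, i.e.\ iff $N_{\ell/X}$ is globally generated at $x$. By the splitting types above this holds at every $x$ for first type lines and at no $x$ for second type lines (the $\mathcal O(-1)$ summand is never hit), whence the non-smooth locus of $p$ is exactly $\bL_2$.

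For (1) I would use $\Phi$. The general theory of degeneracy loci gives that every component of $F_2=\{\rk\Phi\leq2\}$ has codimension at most $(n-2)(3-2)=n-2$ in $F$; matching this with the reverse inequality is the main obstacle, and I would obtain it from a tangent-space computation showing that at a general point of each component the degeneracy locus is cut out transversally, using crucially that smoothness of $X$ forbids $\rk\Phi\leq 1$. Once $F_2$ has pure codimension $n-2$ it is automatically Cohen--Macaulay (being determinantal), and combined with generic reducedness at the generic point of each component (from the same transversality) this gives $F_2$ reduced of pure dimension $n-2$. Reducedness of $W$ is then free: $\bL_2\to F_2$ is a $\PP^1$-bundle, so $\bL_2$ is reduced, and the scheme-theoretic image of a reduced scheme is reduced.

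For (2), from (1) we have $\dim\bL_2=n-1$, so $\dim W\leq n-1$ and in particular $W\subsetneq X$. To see $\bL_2\to W$ is generically finite, hence $W$ a divisor, I would show the differential of $p|_{\bL_2}$ is generically injective, i.e.\ that $T\bL_2\cap\ker dp=0$ at a general point of $\bL_2$ — a direct computation using that $dp$ has rank exactly $n-1$ along $\bL_2$ — which forces $\dim W=n-1$. Finally for (4): varying $X$ in the space of smooth cubics and working on the universal incidence variety, I would check that the universal second fundamental form is transverse to the rank-$\leq 2$ stratum, so that generic smoothness in characteristic $0$ yields $F_2(X)$ smooth for general $X$; irreducibility would follow from a connectedness statement for the determinantal locus $F_2\subset F$ (a Lefschetz/positivity argument for its Porteous class), after which $W$ is irreducible as the image of the $\PP^1$-bundle $\bL_2$ over the irreducible $F_2$. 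The hardest points are the transversality inputs: the codimension lower bound in (1) and the smoothness and connectedness in (4), both of which hinge on controlling $\Phi$ via the smoothness of $X$.
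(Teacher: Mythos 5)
Your skeleton is the standard one and essentially coincides with the source the paper cites: the paper's own ``proof'' is little more than the reference to \cite[2.2.11--2.2.15]{huybrechts} plus the single substantive remark that the determinantal description of $F_2$ in the expected codimension gives Cohen--Macaulayness and hence no embedded points --- a point you independently make. Your part (3) (smoothness of $p$ at $(\ell,x)$ iff $N_{\ell/X}$ is globally generated at $x$, decided by the two splitting types) is complete and correct. The difficulties lie exactly where you flag them, but the fixes you propose do not work as stated.

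The main gap is the upper bound $\dim F_2\le n-2$, which must hold for \emph{every} smooth cubic since parts (1)--(3) are not generic statements. You propose to obtain it from transversality of $\Phi$ to the rank-$2$ stratum ``at a general point of each component'', but for a fixed $X$ there is no genericity to exploit: a degeneracy locus on a fixed variety can have components of larger than expected dimension, and the fact that smoothness of $X$ forbids $\rk\Phi\le1$ bounds the rank from below without bounding the dimension of the locus where it drops to $2$. An independent geometric input is needed (in \cite{huybrechts} this comes from the finiteness properties of the restricted Gauss map / tangent-plane correspondence), and without it your (1) and (2) risk circularity, since your route to ``$W$ is a divisor'' uses the dimension count of (1). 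Two smaller gaps: in (2), the ``direct computation'' that $T\bL_2\cap\ker dp=0$ generically requires identifying the subspace $T_{[\ell]}F_2\subseteq H^0(N_{\ell/X})\cong H^0(\OO_\ell(1))^{\oplus(n-2)}$ and excluding the degenerate possibility that it meets $H^0(N_{\ell/X}(-x))$ nontrivially for \emph{every} $x\in\ell$ (which does happen for special $(n-2)$-planes, e.g.\ a full summand $H^0(\OO_\ell(1))$), so this is not automatic; and in (4), the Fulton--Lazarsfeld-type connectedness you invoke for irreducibility requires ampleness of $\cQ\otimes\Sym^2\cU^\vee$, which is not established (the bundle is merely globally generated on the ambient Grassmannian, with trivial summands along its lines) --- the standard argument instead proves irreducibility of the universal locus $\mathcal{F}_2$ over $|\OO_{\PP^{n+1}}(3)|$ and descends to the general fibre, which is in fact the mechanism the paper uses later in the proof of Theorem \ref{thm:t2bir}.
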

\begin{proof}
    These are a combination of \cite[2.2.11-2.2.15]{huybrechts}, noting that the description of $F_2$ as a degeneracy
    locus of the right codimension implies that it is Cohen--Macaulay, hence has no embedded points.
\end{proof}

In low dimensions, we have the following. In the cubic threefold case, the morphism $p$ is generically finite of degree
six and is ramified at second type lines, meaning they count with multiplicity at least two out of the six. For cubic
fourfolds, the generic fibre $C_x=p^{-1}(x)$ is a $(2,3)$ curve in $\PP^3$ (many of whose properties are known, see
\cite[\S 3]{cubicfourfolds1} for a summary), and the locus $\bL_2$ consists of the locus of singularities of the $C_x$.

If $n=4$ we use special notation for the above, in line with \cite[\S 2]{cubicfourfolds1}. From now on $X$ will be a
cubic fourfold and we denote by $S\subset F$ the locus of second type lines in its Fano scheme. Denote by
$\phi:F\dashrightarrow F$ the Voisin map, taking a general line $\ell$ to its residual intersection on the unique
tangent 2-plane to $\ell$. This map is not defined on the locus of lines of $X$ contained inside a 2-plane inside $X$
and on the second type locus $S\subset F$. If $X$ is general then it contains no 2-planes and $S$ is smooth and
irreducible, and we can resolve this map by a single blowup of $F$ at $S$ and we obtain a lift
$\phi:\mathrm{Bl}_S(F)\to F$. In this case the fibre $E_\ell\cong\PP^1$ of the exceptional divisor $E\subset\tilde{F}$
over a point $[\ell]\in S$, parametrises pairs $(\ell,\Pi)$ where $\Pi$ is a 2-plane tangent along the second type line
$\ell$ and in fact \cite[Remark 2.2.18]{huybrechts} gives that $\mathrm{Bl}_SF\cong\tF$ where
\[\tF=\{(\ell,\Pi):\Pi\text{ tangent along }\ell\}\subset\G(2,6)\times\G(3,6).\]

We also constructed in \cite[Proposition 4.2]{cubicfourfolds1} a universal family
\begin{equation}\label{eq:tF}
\tilde{\cF}:=\{(X,(\ell,\ell',\Pi)) : X\cap\Pi=2\ell+\ell'\text{ or }\ell,\ell'\subset\Pi\subset X\}\subset|\OO_{\PP^5}(3)| \times \cG
\end{equation}
where $\cG=\{(\ell,\ell',\Pi):\ell,\ell'\subset\Pi\}\subset\G(2,6)\times\G(2,6)\times\G(3,6)$ (actually, the case
$\ell,\ell'\subset\Pi\subset X$ was omitted from \cite{cubicfourfolds1} because the interest there was in general cubics
which do not contain any planes). From the discussion
above, if $X$ is general, then the fibre $\tilde{\cF}_X$ of the projection $\mathrm{pr}_1:\tilde{\cF} \to
|\OO_{\PP^5}(3)|$ over $X$ is just $\tilde{F}\cong\mathrm{Bl}_S(F)$. On the other hand, over special smooth cubics $X$,
for example the Fermat, $\tilde{\cF}_X$ is at least irreducible and birational to $F$ as seen from the following lemma.
We will mostly call the above locus $\tilde{\cF}_X$ to avoid confusing it with $\mathrm{Bl}_SF$.
\begin{lemma}\label{lem:tFXirred}
    Let $X\subset\PP^5$ be a smooth cubic fourfold. Then $\tilde{\cF}_X$ is irreducible and birational to $F$ the Fano
    variety of lines on $X$.
\end{lemma}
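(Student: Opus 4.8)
The plan is to use the projection to the doubled line to identify $\tilde{\cF}_X$ birationally with $F$, and then to control the fibres of this projection over the locus where Voisin's construction degenerates in order to rule out extra components. Recall that $F=F(X)$ is smooth and irreducible of dimension $4$ for every smooth cubic fourfold. Consider the morphism $\pi\colon\tilde{\cF}_X\to F$, $(\ell,\ell',\Pi)\mapsto\ell$; this lands in $F$ since $2\ell\subset X\cap\Pi$ (or $\ell\subset\Pi\subset X$) forces $\ell\subset X$, and it is projective because $\cG$ is. Let $U\subset F$ be the open locus of lines of the first type not contained in any plane of $X$. By the classical dichotomy for the normal bundle of a line, a first-type line $\ell$ has a \emph{unique} $2$-plane $\Pi_\ell$ tangent along it; for $\ell\in U$ one has $\Pi_\ell\not\subset X$ (otherwise $\ell$ would lie on a plane of $X$), so the residual line $\ell'=\overline{(X\cap\Pi_\ell)-2\ell}$ is uniquely determined. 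Hence $\pi$ is bijective over $U$, and $U$ is dense: its complement lies in $S$ (dimension $2$) together with the locus of lines on a plane of $X$, which is empty for general $X$ and of dimension $\le 2$ when $X$ has finitely many planes, as for the Fermat. Thus $\sigma:=(\pi|_{\pi^{-1}(U)})^{-1}$ is a section of $\pi$ over $U$, and $Z:=\overline{\sigma(U)}$ is an irreducible $4$-dimensional component of $\tilde{\cF}_X$ with $\pi|_Z\colon Z\to F$ birational.

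It remains to show $\tilde{\cF}_X=Z$. Here I would first pass to the incidence $\tilde F=\{(\ell,\Pi):\Pi\text{ tangent along }\ell\}$ via the map forgetting $\ell'$; this map is an isomorphism over the dense locus $\Pi\not\subset X$, since there $\ell'$ is recovered as the residual line. One checks that $\tilde F$ itself is irreducible and birational to $F$: for general $X$ this is the cited identification $\tilde F\cong\Bl_S F$, and in general $\tilde F$ is the closure, inside the $\PP^3$-bundle $\{(\ell,\Pi):\ell\subset\Pi\}\to F$, of the section $\ell\mapsto(\ell,\Pi_\ell)$ over $U$, realised as a degeneracy locus for the tangency condition. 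Granting this, the only way $\tilde{\cF}_X$ could fail to be irreducible is through the fibres of $\tilde{\cF}_X\to\tilde F$ over the locus $\Pi\subset X$, i.e.\ over lines lying on a plane $P\subset X$, where the residual line $\ell'$ is no longer pinned down by $(\ell,\Pi)$. Transporting irreducibility of $\tilde F$ along the birational map $\tilde{\cF}_X\dashrightarrow\tilde F$ then yields the lemma, provided those plane fibres contribute no second component.

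The main obstacle is precisely this analysis over the degeneration loci. Over $S$ the fibre of $\pi$ jumps from a point to the pencil $\PP^1$ of $2$-planes tangent along a second-type line, and one must verify these pencils are genuine flat limits of the planes $\Pi_\ell$, so that $\pi^{-1}(S)\subset Z$; this is exactly the content of the blow-up description $\Bl_S F\cong\tilde F$, where I would lean on \cite[Remark 2.2.18]{huybrechts}. The delicate case is a line $\ell\subset P\subset X$ on a plane of $X$ (as occurs for the Fermat): the naive incidence $\{(\ell,\ell',P):\ell,\ell'\subset P\}\cong(\PP^2)^\vee\times(\PP^2)^\vee$ is itself $4$-dimensional, so one must show that only a proper, at most $3$-dimensional, sublocus of it actually lies in $\tilde{\cF}_X$ — namely the limits of honest Voisin triples as $\ell$ degenerates onto $P$, in which $\ell'$ is constrained to the flat limit of the residual line rather than ranging over all of $P$. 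Pinning down this flat limit of the tangent $2$-plane, and of $\ell'$, along lines entering a plane of $X$, and thereby confirming that no spurious $4$-dimensional component arises, is the step I expect to require the most care.
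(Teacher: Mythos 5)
Your proposal does not actually prove the lemma: you construct the distinguished component $Z=\overline{\sigma(U)}$, birational to $F$ via the tautological section over the first-type lines, but the statement that $\tilde{\cF}_X$ has no \emph{other} irreducible component is exactly the content of the lemma, and you explicitly leave it open. The key tool in the paper's proof, which is missing from yours, is upper semi-continuity of fibre dimension applied to the universal family: $\tilde{\cF}$ is smooth and irreducible of dimension $55+4$ (by \cite[Proposition 4.2]{cubicfourfolds1}), so every irreducible component of every fibre $\tilde{\cF}_X$ has dimension at least $4$. Since $S$ is pure $2$-dimensional and the fibre of $\pi$ over a second-type line is the $\PP^1$ of tangent planes, the part of $\tilde{\cF}_X$ lying over $S$ is only $3$-dimensional and hence cannot be a component; it is therefore absorbed into the closure of the locus over $F\setminus S$, where $\pi$ is generically injective. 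This argument requires nothing about the geometry of $S$. Your substitute for this step --- invoking $\tilde F\cong\Bl_SF$ via \cite[Remark 2.2.18]{huybrechts} to see that the pencils over $S$ are flat limits --- is only available when $S$ is smooth, i.e.\ for general $X$; the paper stresses that for the Fermat $S$ is non-normal and $\tilde{\cF}_X$ is \emph{not} the blow-up of $F$ along $S$. Since the sole purpose of the lemma is to cover special cubics such as the Fermat, this route does not close the gap where it matters.

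Your second flagged difficulty, the locus over lines on a plane $P\subset X$, is a genuine one (and one on which the paper's own proof is also silent), but your proposed resolution cannot work as stated: with the definition of $\tilde{\cF}$ given in the paper, \emph{every} triple $(\ell,\ell',P)$ with $\ell,\ell'\subset P\subset X$ lies in $\tilde{\cF}_X$, so the locus over $P$ is the full $4$-dimensional $(\PP^2)^\vee\times(\PP^2)^\vee$, not a proper sublocus of it; and a $4$-dimensional irreducible variety cannot be contained in the $4$-dimensional irreducible $Z$ without being equal to it. So showing ``only a $3$-dimensional sublocus actually lies in $\tilde{\cF}_X$'' is not an available escape --- one must either replace $\tilde{\cF}_X$ by the closure of the honest locus $\{X\cap\Pi=2\ell+\ell'\}$, or restrict to cubics without planes, before the dimension count can finish the argument. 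In short: the existence of the birational component is fine, but the irreducibility claim is unproved in your write-up, and the two loci you defer are precisely where the proof lives.
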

\begin{proof}
    Note that $\tilde{\cF}$ is smooth and irreducible from \cite[Proposition 4.2]{cubicfourfolds1} and for a general
    cubic $X$ the fibre $\tilde{\cF}_X$ is irreducible, smooth and birational to $F$ as discussed above. From semi-continuity of fibre
    dimension on the source \cite[\href{https://stacks.math.columbia.edu/tag/02FZ}{Lemma 02FZ}]{stacks-project}, for any
    smooth cubic $X$, every irreducible component of $\tilde{\cF}_X$ has dimension at least $4$. As $F$ is irreducible
    and 4-dimensional for any smooth cubic and by definition the fibres of $\pi:\tilde{\cF}_X\to F$ parametrise planes
    tangent to a line, they are generically one point, whereas for second type lines is 1-dimensional from
    \cite[Corollary 2.2.6]{huybrechts}. Hence if the locus in $\tilde{\cF}_X$ over $S\subset F$ were an irreducible
    component of $\tilde{\cF}_X$, it would be of dimension three, contradicting the above mentioned semi-continuity. In
    other words, for every smooth cubic $\pi$ is birational and $\tilde{\cF}_X$ is irreducible.
\end{proof}
For $Y\subset F$ a subvariety, we denote by
$E_Y\subseteq\tilde{\cF}_X$ its inverse image in $\tilde{\cF}_X$. From its very definition, the Voisin map extends to a
morphism $\phi=\mathrm{pr}_1:\tilde{\cF}_X\to F$ for any smooth cubic fourfold, by sending $\ell\mapsto\ell'$ in
Equation \eqref{eq:tF}. Note also that $\tilde{\cF}$ admits a
morphism to the universal Fano variety of lines $\cF$ by projecting to the first factor of $\cG$. In particular, we let
$\cE_\cS$ be the inverse image of the universal family of second type loci, which as noted, for general
$X\in|\OO_{\PP^5}(3)|$ is just a $\PP^1$-bundle over $S$, namely $\cE_\cS|_X\cong E$.

\section{The Fermat Cubic and Applications}\label{sec:fermat}

Before we specialise to the case of the Fermat cubic fourfold, we recall some facts about the threefold case (see
\cite{cg, roulleauelliptic, irrgk}). Let $Y\subset\PP^4$ be any smooth cubic threefold. Its Fano surface
$F(Y)\subset\rm{G}(2,5)$ is a smooth irreducible projective general type surface canonically embedded and of degree
$K_{F(Y)}^2=45$ in the Pl\"ucker $\PP^9$. The locus of second type lines $F_2(Y)\subset F(Y)$ is pure 1-dimensional and
of class $2K_{F(Y)}=\OO_{\PP^9}(2)|_{F(Y)}$.

\begin{proposition}\label{prop:fermat3}
    Let $Y=V(\sum_{i=0}^4 x_i^3)\subset\PP^4$ be the Fermat threefold. Then the locus of second type lines $F_2(Y)$
    consists of the union of 30 smooth elliptic curves, each isomorphic to the Fermat plane cubic. In particular,
    for $W=p(q^{-1}(F_2(Y)))\subset Y$, we have $\OO_Y(W)\cong\OO_Y(30)$ and that \[p:q^{-1}(F_2(Y))\to
    W\] is birational when restricted to any of the 30 irreducible components of $q^{-1}(F_2(Y))$.
\end{proposition}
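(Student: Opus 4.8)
The plan is to exploit the large automorphism group of the Fermat together with a completely explicit coordinate description of the second type condition. First I would write a line $\ell\subset Y$ as the image of $t\mapsto[a_i+tb_i]$ for two spanning points $a=[a_i]$, $b=[b_i]$, so that $\ell\subset Y$ is encoded by the four equations $\sum a_i^3=\sum a_i^2b_i=\sum a_ib_i^2=\sum b_i^3=0$. Since $\partial_iF=3x_i^2$ on the Fermat, the tangent hyperplane to $Y$ at $a+tb$ has coefficient vector $((a_i+tb_i)^2)_i=A+2tB+t^2C$ with $A=(a_i^2)$, $B=(a_ib_i)$, $C=(b_i^2)$; thus the Gauss map $\ell\to(\PP^4)^\vee$ traces the conic spanned by $A,B,C$. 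By the characterisation of second type lines as those along which this conic degenerates (\cite[\S 2.2]{huybrechts}), $\ell\in F_2(Y)$ exactly when $A,B,C$ are linearly dependent, i.e.\ when some nonzero binary quadratic $\lambda s^2+\mu st+\nu t^2$ vanishes at every $(a_i,b_i)$. Equivalently, the five points $[a_i:b_i]\in\PP^1$ take \emph{at most two} distinct values.

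The second step is purely combinatorial. Two values give a partition $\{0,\dots,4\}=I_1\sqcup I_2$ according to the value of $[a_i:b_i]$, and $\ell$ then passes through the two points $P_1,P_2$ in which it meets the complementary coordinate subspaces $L_{I_1},L_{I_2}$. Imposing $\ell\subset Y$ separates $\sum x_i^3$ into its two blocks and forces $P_1,P_2$ to lie on the Fermat cubics of $L_{I_1},L_{I_2}$; in particular a part of size one would require a coordinate point to lie on the Fermat, which is impossible, so the partition has type $(2,3)$. For $|I_1|=2$ the point $P_1$ ranges over the three points of $\{x_i^3+x_j^3=0\}\subset L_{I_1}\cong\PP^1$, while for $|I_2|=3$ the point $P_2$ ranges over the smooth plane cubic $E_{I_2}=\{x_k^3+x_l^3+x_m^3=0\}\subset L_{I_2}\cong\PP^2$, a copy of the Fermat cubic curve. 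Fixing the partition ($\binom{5}{2}=10$ choices) and $P_1$ (3 choices) yields $30$ families, and $P_2\mapsto[\overline{P_1P_2}]$ embeds $E_{I_2}$ as an irreducible component of $F_2(Y)$, since $\ell$ recovers $P_2$ as $\ell\cap L_{I_2}$. I would then check that the degenerate strata produce no further components: a line with some coordinate identically zero lies in a coordinate hyperplane $\{x_i=0\}$, and the two-value analysis forces the remaining four indices to split as $(2,2)$, recovering one of the $27$ lines of the Fermat surface $\{x_i=0\}\cap Y$; these occur precisely as the members of the $30$ families where $P_2$ meets a coordinate axis, hence as special points on the elliptic curves rather than new components. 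As a consistency check, $\deg F_2(Y)=2K_{F(Y)}\cdot K_{F(Y)}=2\cdot45=90=30\cdot3$, matching $30$ plane cubics.

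For the statement on $W$, observe that the family attached to $(I_1,I_2,P_1)$ sweeps out the cone over $E_{I_2}\subset L_{I_2}$ with vertex $P_1\in L_{I_1}$, a degree three surface in $\PP^4$. These $30$ cones are pairwise distinct (they have different base planes or vertices) and $W$ is reduced by Proposition \ref{prop:Wdiv}, so $\deg_{\PP^4}W=30\cdot3=90$. Writing $\OO_Y(W)=\OO_Y(d)$ gives $\deg_{\PP^4}W=d\cdot\deg Y=3d$, whence $d=30$ and $\OO_Y(W)\cong\OO_Y(30)$. Finally, on each component the map $p:q^{-1}(E_{I_2})\to W$ is the parametrisation of the corresponding cone by its rulings: a general point $Q$ of the cone lies on the single ruling $\overline{P_1Q}$, whose intersection with $L_{I_2}$ is the unique $P_2\in E_{I_2}$ with $Q\in\overline{P_1P_2}$, so the map is birational onto its image.

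The main obstacle is the first two steps, and specifically the bookkeeping of degenerate configurations: one must be sure that the equivalence ``second type $\iff$ at most two values'' is exact (including the points $(a_i,b_i)=(0,0)$ and the case where the two values collide, which is the generic first type conic), and that the strata where coordinates vanish contribute only the finitely many lines lying in coordinate hyperplanes, which sit inside the $30$ families as the special points described above. Verifying that each of the $30$ embedded curves is smooth and that none of the low-dimensional degenerate loci inflates to an extra component is where the care is needed; everything after that, namely the cone description, the degree count, and the birationality, is essentially formal.
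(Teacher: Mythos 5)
Your proposal is correct, and it takes a genuinely different route from the paper. The paper imports Roulleau's work on the Fermat Fano surface: it quotes the existence and configuration of $30$ elliptic curves $E_i\subset F(Y)$ attached to cones over plane cubics with vertex at the $30$ Eckardt points, observes $E_i\subset F_2(Y)$ because every line through an Eckardt point is of second type, and then rules out any further components purely by intersection theory, from $H.[F_2(Y)]=K_{F(Y)}.(2K_{F(Y)})=90$ versus $H.E_i=3$; the birationality of $p$ on each component is likewise extracted from the projection-formula computation $[\OO_Y(1)]^2p_*q^*[F_2(Y)]=90$. You instead classify all second type lines from scratch: writing $\Phi(a+tb)=A+2tB+t^2C$ with $A=(a_i^2)$, $B=(a_ib_i)$, $C=(b_i^2)$ and translating second type into linear dependence of $A,B,C$, hence into the five points $[a_i:b_i]$ taking at most two values, which forces the $(2,3)$ partition and produces exactly the $30$ families $\mathrm{Join}(p^{(\mu)}_{i,j},E_{I_2})$. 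This makes exhaustiveness a direct consequence of the classification (with the degree count $90=30\cdot 3$ demoted to a consistency check), and the birationality becomes the elementary statement that a point of a cone off the vertex lies on a unique ruling. Your approach is self-contained and is in fact the exact one-dimension-down analogue of the paper's own treatment of the Fermat fourfold in Section \ref{sec:fermat} (the decomposition into $V_{i,j,k}$ and $V_{i,j}$), which is a pleasant uniformity the paper's citation of Roulleau obscures; the paper's route is shorter given the reference. The only points you must nail down are the ones you already flag: that rank one dependence of $A,B,C$ (a single value) is excluded because $a,b$ would then be proportional, that a part of size one is excluded since a coordinate point does not lie on the Fermat, and that the strata with $(a_i,b_i)=(0,0)$ yield only the $27$ lines of each coordinate hyperplane section, which are special members (indeed intersection points) of the $30$ curves rather than new components. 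All of these checks go through.
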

\begin{proof}
    This basically follows from the work of Roulleau on the Fermat cubic \cite[Proposition 10 and
    p.395]{roulleauelliptic}. First note that from loc.\ cit.\ a smooth elliptic curve in $F(Y)$ corresponds to a cone
    in $X$ over a plane cubic, with vertex an Eckardt point of $X$. Letting $H=\OO_{\PP^9}(1)|_{F(Y)}\cong
    K_{F(Y)}\in\Pic F(Y)$ be the Pl\"ucker polarisation, one proves that each such elliptic curve $E$ has degree
    $H.E=3$. Now, loc.\ cit.\ constructs 30 distinct elliptic curves $E_i$ in $F(Y)$, each isomorphic to the Fermat
    plane cubic and their configuration is described in \cite[p.395]{roulleauelliptic}. As every line through an Eckardt
    point is of second type (see the beginning of Section \ref{sec:monodromy3}), we obtain $E_i\subset F_2(Y)$ for all
    $i=1,\ldots, 30$. Moreover, as $H.[F_2(Y)]=K_{F(Y)}.(2K_{F(Y)})=90$, it must be that $F_2(Y)=\sum E_i$ and so $W$ is
    the union of these 30 cones over plane Fermat cubics, each of which is a hyperplane section of $Y$, giving
    $\OO_Y(W)\cong\OO_Y(30)$. Finally
    \begin{align*}
    [\OO_Y(1)]^2p_*q^*[F_2(Y)]&=(q_*p^*[\OO_Y(1)]^2).[F_2(Y)]\\
        &=H.[F_2(Y)]\\
        &=90, 
    \end{align*}
    which forces the restriction of $p$ to any irreducible component $q^{-1}(E_i)$ of the universal second type family
    to be birational onto its image.
\end{proof}

Consider now $X=V(\sum_{i=0}^5 x_i^3)\subset\PP^5$ the Fermat cubic fourfold. For $\{i,j,k\}\subset\{0,\ldots,5\}$ consider the
hypersurfaces
\begin{align*}
    V_{i,j,k}&=V(x_i^3+x_j^3+x_k^3)\subset X\\
    V_{i,j}&=V(x_i^3+x_j^3)\subset X.
\end{align*}
If $\{i,j,k,i',j',k'\}=\{0,\ldots,5\}$ then $V_{i,j,k} = V_{i',j',k'}$, so in this way we obtain 10 distinct
hypersurfaces $V_{i,j,k}$ in $X$. Denote by
\begin{align}\label{eq:Cijk}
\begin{split}
C_{i,j,k}&=V_{i,j,k}\cap V(x_{i'},x_{j'},x_{k'})\\
C'_{i,j,k}&=V_{i',j',k'}\cap V(x_i,x_j,x_k). 
\end{split}
\end{align}
Then \[V_{i,j,k} =\mathrm{Join}(C_{i,j,k},C'_{i,j,k}),\] the join variety of $C_{i,j,k},C'_{i,j,k}$. 

For the $V_{i,j}$, note that there are 15 distinct hypersurfaces $V_{i,j}$ in $X$. Suppose that $0\leq i'<i''<j'<j''\leq 5$,
with $\{i',i'',j',j'',i,j\}=\{0,\ldots,5\}$ and let 
\begin{align}\label{eq:Gij}
G_{i,j}=V(x_{i'}^3+ x_{i''}^3+x_{j'}^3+x_{j''}^3, x_i,x_j)
\end{align}
a Fermat cubic surface. We have $V(x_i^3+x_j^3, x_{i'}, x_{i''}, x_{j'}, x_{j''})=\{p_{i,j}^{(1)},p^{(2)}_{i,j},
p^{(3)}_{i,j}\}$ corresponding to the three cube roots of $-1$. Then \[V_{i,j}=\mathrm{Join}(G_{i,j},
\{p^{(1)}_{i,j},p^{(2)}_{i,j}, p^{(3)}_{i,j}\})\] the join variety of $G_{i,j}$ with the set of points
$\{p^{(1)}_{i,j},p^{(2)}_{i,j}, p^{(3)}_{i,j}\}$. 

\begin{remark}
Actually the 45 points $p_{i,j}^{(\mu)}$ are all the Eckardt points of the Fermat cubic fourfold. They are Eckardt since
there is a 2-dimensional family of lines in $X$ through them, namely the lines joining the point $p_{i,j}^{(\mu)}$ with
the points of the surface $G_{i,j}$. Finally the maximal number of Eckardt points for a cubic fourfold is given as 45 by
\cite[3.12]{ccs}.
\end{remark}

\subsection{The second type locus for the Fermat cubic fourfold}

Recall first the following Lemma. 
\begin{lemma}(\cite[Definition 6.6]{cg})
    A line $\ell\subset X=V(f)$ in a smooth cubic fourfold is of second type if and only if the Gauss map
    $\Phi:X\to(\PP^5)^\vee$, $\Phi(x)=\nabla(f)(x)$ restricted to $\ell$ is a two-to-one covering of $\PP^1$. In other
    words, for every point $x\in \ell$, there is an antipodal point $x'\in\ell$ at which $\Phi(x)=\Phi(x')$.
\end{lemma}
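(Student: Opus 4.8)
The plan is to work in explicit coordinates and reduce the Gauss map along $\ell$ to a collection of binary quadrics, then read off its degree from the splitting type of the normal bundle. Choose coordinates so that $\ell=V(x_2,x_3,x_4,x_5)$, parametrised by $[x_0:x_1]$. Since $f$ vanishes identically on $\ell$, differentiating $f(x_0,x_1,0,0,0,0)\equiv 0$ in the $x_0$- and $x_1$-directions gives $(\partial_0 f)|_\ell=(\partial_1 f)|_\ell=0$, so that
\[
\Phi|_\ell=[0:0:q_2:q_3:q_4:q_5],\qquad q_j:=(\partial_j f)|_\ell\in H^0(\OO_\ell(2)),
\]
a morphism $\PP^1\to(\PP^5)^\vee$ landing in the $\PP^3=\{y_0=y_1=0\}$ whose coordinates are four binary quadratic forms. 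First I would record that smoothness of $X$ along $\ell$ is equivalent to the $q_j$ having no common zero on $\ell$: at a point where all $q_j$ vanish one would have $\nabla f=0$, contradicting smoothness.

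Next I would identify the span $Q:=\langle q_2,q_3,q_4,q_5\rangle\subseteq H^0(\OO_\ell(2))$ with the data controlling the normal bundle. Twisting the normal bundle sequence
\[
0\to N_{\ell/X}\to N_{\ell/\PP^5}=\OO_\ell(1)^{\oplus 4}\xrightarrow{(q_j)}\OO_\ell(3)\to 0
\]
by $\OO_\ell(-1)$ and taking cohomology, the resulting map $\CC^4\to H^0(\OO_\ell(2))=\CC^3$ is precisely $e_j\mapsto q_j$, so its image is $Q$ and its rank is $\dim Q$. Comparing with the two possible splitting types then shows that $\ell$ is of first type, $N_{\ell/X}\cong\OO_\ell^{\oplus 2}\oplus\OO_\ell(1)$, exactly when $\dim Q=3$, and of second type, $N_{\ell/X}\cong\OO_\ell(-1)\oplus\OO_\ell(1)^{\oplus 2}$, exactly when $\dim Q=2$ (the cases $\dim Q\leq 1$ being excluded by smoothness, since a binary quadric always has a zero).

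Finally I would translate the rank of $Q$ into the degree of $\Phi|_\ell$. Writing the quadrics in Veronese coordinates $(x_0^2,x_0x_1,x_1^2)$, the map $\Phi|_\ell$ factors as the Veronese embedding $v_2\colon\PP^1\hookrightarrow\PP^2$ followed by a linear map $\PP^2\dashrightarrow\PP^3$ of rank $\dim Q$. If $\dim Q=3$ (first type) this linear map is injective, so $\Phi|_\ell$ is the Veronese embedding onto a smooth conic and is one-to-one. If $\dim Q=2$ (second type) the image is a line $\PP^1$ and $\Phi|_\ell\colon\PP^1\to\PP^1$ is given by $[A:B]$ for a basis $A,B$ of $Q$; since $A,B$ inherit no common zero from the first step, this is a degree-$2$, hence two-to-one, cover, whose fibres define the asserted antipodal involution. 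The one point requiring care — and the main obstacle — is precisely this last degeneracy: one must use smoothness to rule out that the two spanning quadrics share a common linear factor, which would collapse the cover to degree one. Everything else is linear algebra on the coefficient matrix of the $q_j$.
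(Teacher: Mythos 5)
Your argument is correct. Note that the paper itself gives no proof of this lemma -- it is quoted from Clemens--Griffiths -- and your computation is essentially the standard argument found there and in Huybrechts \S 2.2: identify $\Phi|_\ell$ with the four binary quadrics $q_j=(\partial_jf)|_\ell$, observe via the twisted normal bundle sequence that $\ell$ is of second type exactly when their span $Q\subseteq H^0(\OO_\ell(2))$ has dimension $2$, and use smoothness (no common zero of the $q_j$) both to exclude $\dim Q\leq 1$ and to guarantee that the resulting pencil $[A:B]$ has no base point, hence honest degree $2$. The only step you leave implicit is that $\OO_\ell^{\oplus 2}\oplus\OO_\ell(1)$ and $\OO_\ell(-1)\oplus\OO_\ell(1)^{\oplus 2}$ are the only possible splitting types, but this follows immediately from the constraints that $N_{\ell/X}$ has rank $3$ and degree $1$ and embeds in $\OO_\ell(1)^{\oplus 4}$, so every summand has degree at most $1$.
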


In the case of the Fermat $X\subset\PP^5$, $\nabla(f)=\langle x_0^2, \ldots, x_5^2\rangle$ which we denote by $\nabla X$.
The locus $W\subset X$ spanned by second type lines is divisorial from Proposition \ref{prop:Wdiv} for any smooth
cubic.

Let $F\subset\mathrm{G}(2,6)$ be the Fano scheme of lines of the Fermat cubic and $S\subset F$ the second type locus. We
denote by $S_{i,j,k}\subset F$ the locus of ruling lines of the variety $\mathrm{Join}(C_{i,j,k},C'_{i,j,k})$ and by
$S^{(\mu)}_{i,j}\subset F$ the locus of ruling lines of the variety $ \mathrm{Join}(G_{i,j}, \{p^{(\mu)}_{i,j}\})$,
$\mu=1,2,3$. Note that $S_{i,j}^{(\mu)}$ is isomorphic to $G_{i,j}$ of Equation \eqref{eq:Gij}, i.e., to the Fermat
cubic surface. On the other hand, each $S_{i,j,k}$ is isomorphic to $C_{i,j,k}\times C'_{i,j,k}$ for two Fermat cubic
curves, since these two curves are disjoint.

\begin{lemma}
    We have that $V_{i,j,k}, V_{i,j}\subset W$ and $S^{(\mu)}_{i,j}, S_{i,j,k}\subset S$ for all
    $\{i,j,k\}\subset\{0,\ldots,5\}$ and $\mu\in\{1,2,3\}$.
\end{lemma}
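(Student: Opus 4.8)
The plan is to exhibit the ruling lines explicitly and check two things for each of them: that they lie on $X$, and that they are of second type via the Gauss map criterion of the preceding Lemma. The containments $V_{i,j,k}, V_{i,j}\subset W$ and $S_{i,j,k}, S^{(\mu)}_{i,j}\subset S$ will then both fall out, the former because the joins are by definition swept out by these ruling lines and $W$ is the union of all second type lines. The two cases are structurally identical: in each, the hypersurface is a join of two loci whose supports are complementary blocks of the coordinates $x_0,\ldots,x_5$, and on each block the corresponding partial Fermat sum vanishes. I would therefore treat them in parallel.

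First I would parametrise a ruling line. For $V_{i,j,k}=\mathrm{Join}(C_{i,j,k},C'_{i,j,k})$, a ruling line joins a point $p\in C_{i,j,k}$, supported on $\{i,j,k\}$, to a point $q\in C'_{i,j,k}$, supported on the complementary coordinates $\{i',j',k'\}$, and a general point of the line is $sp+tq$. Because the Fermat polynomial $f=\sum_\ell x_\ell^3$ decomposes as a sum over the two disjoint coordinate blocks, and $p,q$ satisfy the respective cubic equations cutting out $C_{i,j,k},C'_{i,j,k}$, one computes $f(sp+tq)=s^3\cdot 0+t^3\cdot 0=0$, so the whole line lies in $X$ and determines a point of $F$. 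The same computation, now with $p=p^{(\mu)}_{i,j}$ supported on $\{i,j\}$ and $q\in G_{i,j}$ supported on the complementary four coordinates, shows that every ruling line of $\mathrm{Join}(G_{i,j},\{p^{(\mu)}_{i,j}\})$ lies in $X$.

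Next I would apply the Gauss map criterion. Since $\nabla X=\langle x_0^2,\ldots,x_5^2\rangle$ and squaring also respects the block decomposition, the restricted Gauss map reads $\Phi(sp+tq)=s^2\Phi(p)+t^2\Phi(q)$. Here $\Phi(p)$ and $\Phi(q)$ are nonzero, since each of $p,q$ has a nonvanishing coordinate, and are supported on disjoint blocks, hence linearly independent; thus the image of $\Phi|_\ell$ is the line $\langle\Phi(p),\Phi(q)\rangle$ and $\Phi|_\ell$ is the map $[s:t]\mapsto[s^2:t^2]$, a two-to-one covering of $\PP^1$. By the Lemma every such ruling line is of second type, giving $S_{i,j,k}\subset S$ and $S^{(\mu)}_{i,j}\subset S$. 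Finally, as $V_{i,j,k}$ (resp.\ $V_{i,j}$) is by definition the join, i.e.\ the union of its ruling lines, every point of it lies on one of these second type lines, whence $V_{i,j,k},V_{i,j}\subset W$.

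I do not expect a serious obstacle here: the entire argument rests on the observation that both $f$ and the squares $x_\ell^2$ defining $\nabla X$ are ``block-diagonal'' with respect to the splitting of the coordinates into complementary sets, so that on a ruling line they become homogeneous of degrees $3$ and $2$ respectively in $s,t$ separately. The only points needing care are that $\Phi(p),\Phi(q)$ are genuinely nonzero and independent, which is immediate from the disjoint supports, and that the resulting $[s:t]\mapsto[s^2:t^2]$ is of degree two rather than one, which it visibly is.
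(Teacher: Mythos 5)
Your proposal is correct and takes essentially the same route as the paper: both exhibit the ruling lines of the joins explicitly and verify they are of second type via the two-to-one Gauss map criterion, exploiting that $f$ and $\nabla f$ split over the two complementary coordinate blocks. The only cosmetic difference is that the paper parametrises the line through a general point $p$ of the join and its ``antipode'' $p'$, whereas you parametrise directly by the two endpoints on the blocks, making the degree-two map $[s:t]\mapsto[s^2:t^2]$ explicit.
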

\begin{proof}
    For $V_{i,j,k}$, by symmetry we may assume without loss of generality that $i=0,j=1,k=2$. Consider a general point
    \[p=[a_0:\cdots:a_5] \in V_{0,1,2}=V(x_0^3+x_1^3+x_2^3, x_3^3+x_4^3+x_5^3)\subset\PP^5,\] in the sense that it
    satisfies $a_0^3+a_1^3+a_2^3=a_3^3+a_4^3+a_5^3=0$ so that not all $a_i$ for
    $i\in\{0,1,2\}$ are zero nor $a_i=0$ for all $i\in\{3,4,5\}$. To prove that $p\in W$, we need to find a second type
    line $\ell_p$ containing $p$. For this, take $p'=[-a_0:-a_1:-a_2:a_3:a_4:a_5]$ (necessarily $\neq p$) and let
    $\ell_p$ be given parametrically by
    \[[(\lambda-t)a_0:(\lambda-t)a_1:(\lambda-t)a_2:(\lambda+t)a_3:(\lambda+t)a_4:(\lambda+t)a_5]\subset V_{i,j,k},\]
    i.e., the line $\lambda p + tp'$. Indeed $\ell_p$ is a line of second type as the antipodal (under the Gauss map) to
    the point $\lambda p+tp'$ is $tp+\lambda p'$, i.e., they have the same image under $\Phi$. The two ramification
    points of the Gauss map restricted to $\ell_p$ are $[a_0:a_1:a_2:0:0:0]$ and $[0:0:0:a_3:a_4:a_5]$.

    For $V_{i,j}$, again by symmetry we may consider only the case $i=0, j=1$. Consider a general point \[p=[a:\omega
    a:a_2:\cdots:a_5]\in V_{0,1} = V(x_0^3+x_1^3, x_2^3+\ldots+ x_5^3)\] for $\omega$ a cube root of $-1$ satisfying that
    neither $a=0$ nor $a_2=\ldots=a_5=0$. To find a second type line $\ell_p$ through $p$, we take $p'=[-a:-\omega
    a:a_2:\ldots:a_5]$ and define $\ell_p$ parametrically by $\lambda p + tp'$. One checks that $\ell_p\subset V_{0,1}$
    and that the antipodal point of $\lambda p+tp'$ is $tp+\lambda p'$ and that the ramification points of the Gauss map
    are $[0:0:a_2:\ldots:a_5]$ and $[a:\omega a:0:\ldots:0]$. In this case we end up with three irreducible components as
    $V_{0,1}$ is reducible.
\end{proof}

\begin{proposition}\label{prop:fermatdegW}
    We have 
    \begin{align*}
        W&=\cup_{i,j,k} V_{i,j,k}\bigcup\cup_{i,j} V_{i,j},\\
        S&=\cup_{i,j,k} S_{i,j,k}\bigcup\cup_{i,j,\mu} S_{i,j}^{(\mu)}.
    \end{align*}
    Moreover, the morphism $p_{q^{-1}(S)}:q^{-1}(S)\to W$ is generically of degree one when restricted to any one of the
    $55=3\cdot15+10$ irreducible components of $S$ as above.
\end{proposition}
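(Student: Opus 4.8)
The plan is to classify the second type lines of the Fermat directly via the Gauss map criterion of the preceding Lemma, which for $X=V(\sum x_i^3)$ with $\nabla X=\langle x_0^2,\ldots,x_5^2\rangle$ reduces the whole statement to a rank condition. Write a line $\ell\subset\PP^5$ through two points $a,b$ parametrically as $x_i=sa_i+tb_i$; then the restricted Gauss map is $[s:t]\mapsto[\ldots:(sa_i+tb_i)^2:\ldots]$, whose image spans the projectivisation of the column space of the $6\times3$ matrix $M=(a_i^2,\,a_ib_i,\,b_i^2)_i$. Since a genuine line always has $\operatorname{rk}M\geq2$, and $\operatorname{rk}M=3$ produces a conic while $\operatorname{rk}M=2$ produces a line traversed $2:1$, the line $\ell$ is of second type precisely when $\operatorname{rk}M=2$.

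First I would compute the $3\times3$ minors of $M$. Writing $r_i:=a_i/b_i$, each minor on rows $i,j,k$ factors as a Vandermonde determinant
\[
\det\begin{pmatrix} a_i^2 & a_ib_i & b_i^2 \\ a_j^2 & a_jb_j & b_j^2 \\ a_k^2 & a_kb_k & b_k^2 \end{pmatrix}=b_i^2b_j^2b_k^2\,(r_i-r_j)(r_j-r_k)(r_k-r_i),
\]
so $\operatorname{rk}M\leq2$ exactly when the six ratios $r_i$ take at most two distinct values; invariantly, when the six coordinate hyperplanes $\{x_i=0\}$ meet $\ell$ in at most two points. Excluding the degenerate single-value case (where $a,b$ are proportional), second type lines correspond bijectively to a partition $\{0,\ldots,5\}=A\sqcup B$ together with the line joining the two points $P_A\in\PP(A)$, $P_B\in\PP(B)$ supported on the two blocks, and $\ell\subset X$ forces $P_A,P_B$ to lie on the sub-Fermats $\sum_{i\in A}x_i^3=0$ and $\sum_{i\in B}x_i^3=0$. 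Partitions of type $(1,5)$ give an empty sub-Fermat and contribute nothing; type $(3,3)$ gives the $\binom{6}{3}/2=10$ joins of pairs of Fermat plane cubics (the loci $S_{i,j,k}$ and $V_{i,j,k}$); and type $(2,4)$ gives $15$ partitions whose size-two block is the triple $p^{(\mu)}_{i,j}$ and whose size-four block is the cubic surface $G_{i,j}$, hence the $15\cdot3=45$ cones (the loci $S^{(\mu)}_{i,j}$ and the components of $V_{i,j}$). This yields both displayed equalities and the count $55=10+3\cdot15$, refining the inclusions of the preceding Lemma.

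For the degree statement I would exploit that in each family the ruling lines join two varieties lying in complementary linear subspaces of $\PP^5$: for $S_{i,j,k}$ the curves $C_{i,j,k}\subset\PP(A)$ and $C'_{i,j,k}\subset\PP(B)$ with $\PP(A)\cong\PP(B)\cong\PP^2$ spanning $\PP^5$, and for $S^{(\mu)}_{i,j}$ the vertex $p^{(\mu)}_{i,j}\in\PP(A)\cong\PP^1$ together with the surface $G_{i,j}\subset\PP(B)\cong\PP^3$. Using the direct sum decomposition $\bC^6=\bC_A\oplus\bC_B$, a general point $x$ of the corresponding component $W_\alpha$ has nonzero projections to both factors, and these projections recover the unique pair $(P_A,P_B)$, hence the unique ruling line through $x$. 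Thus $p|_{q^{-1}(S_\alpha)}\colon q^{-1}(S_\alpha)\to W_\alpha$ admits a rational inverse and is birational, giving degree one on each of the $55$ components. As a numerical check, the resulting $W$ has degree $10\cdot9+45\cdot3=225$ in $\PP^5$, consistent with the deformation-invariant class of the divisor swept by second type lines.

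The main obstacle is the careful bookkeeping of the rank condition on the degenerate strata: one must treat lines meeting or contained in a coordinate hyperplane (where some $b_i=0$, or a whole row of $M$ vanishes) so as to confirm that $\operatorname{rk}M$ is exactly $2$ rather than $1$ or $3$, that the partition attached to a second type line is unique (so the $55$ families are genuinely distinct components and do not overcount), and that the degenerate partitions really contribute nothing. Once the rank-$2$ locus is pinned down cleanly on the open set of lines not contained in any $\{x_i=0\}$ and the rest is handled by taking closures, the enumeration and the birationality statements are essentially formal.
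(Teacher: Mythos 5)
Your proof is correct in outline, but it takes a genuinely different route from the paper. The paper never classifies the second type lines directly: it takes the inclusions $V_{i,j,k},V_{i,j}\subset W$ and $S_{i,j,k},S^{(\mu)}_{i,j}\subset S$ already established in the preceding lemma, computes $(p_*q^*[S]).H_X^3=225$ by citing an intersection-theoretic formula from the authors' earlier work, observes that the $25$ constructed divisors (each a $(3,3)$ complete intersection of degree $9$) already have total degree $3\cdot 3\cdot 25=225$, and lets the numerology force both equalities and the generic degree one on each component. Your argument instead classifies all second type lines from scratch via the rank of the $6\times 3$ matrix $M=(a_i^2,a_ib_i,b_i^2)_i$ and the Vandermonde factorisation of its minors, reducing everything to a partition $\{0,\dots,5\}=A\sqcup B$ and the sub-Fermat conditions on $P_A,P_B$; the type $(1,5)$, $(2,4)$, $(3,3)$ count correctly reproduces the $10+45$ components, and your birationality argument via the splitting $\bC^6=\bC_A\oplus\bC_B$ (recovering the unique ruling line through a general point by linear projection) is an explicit inverse rather than a degree count. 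Each approach has its merits: the paper's is shorter and sidesteps all degenerate strata, but depends on the external class computation and on the preceding lemma; yours is self-contained, re-derives those inclusions, proves directly that no further components exist, and yields strictly more information (a complete list of second type lines, including those in coordinate hyperplanes). The bookkeeping you defer is genuinely routine: zero rows of $M$ correspond to lines inside coordinate hyperplanes, which form loci of dimension at most $2$ that one checks land inside the closures of the $55$ families (e.g.\ the $27$ lines of $G_{i,j}$ and the $30$ elliptic curves of each Fermat threefold section), and since $S$ is pure $2$-dimensional by Proposition \ref{prop:Wdiv} they contribute no new components; likewise the partition attached to a non-degenerate second type line is its level-set partition, hence unique, so the $55$ components are pairwise distinct. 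Your closing degree check $10\cdot 9+45\cdot 3=225$ matches the paper's computation and is a good consistency test.
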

\begin{proof}
    By definition $W=p(q^{-1}(S))$ and one computes $(p_*q^*[S]).H_X^3=225$ from \cite[Lemma 2.6]{cubicfourfolds1}, so
    that $\OO_X(W)=\OO_X(75)$. At this point it is not clear that the degree of $p:q^{-1}(S)\to W$ is generically one.
    On the other hand, we have constructed $10+15=25$ divisors $V_{i,j}, V_{i,j,k}$ above, all of which are contained in
    $W$. As their total degree is $3\cdot3\cdot25=225$, this forces their union to be $W$. Since $S$ is pure
    2-dimensional and the universal family over every irreducible component of it maps generically finitely to $X$ (see
    \cite[Lemma 2.2.12]{huybrechts}), the above also concludes that \[S=\cup_{i,j,\mu} S_{i,j}^{(\mu)} \bigcup \cup_{i,j,k}
    S_{i,j,k}.\]
    The degree of the map $p$ on any irreducible component of $q^{-1}(S)$ must also be generically one by degree
    considerations.
\end{proof}

\subsection{The Voisin map for the Fermat cubic fourfold}

In this section we denote by $\tF:=\tilde{\cF}_X$ for $X$ the Fermat cubic (see Section \ref{sec:prelim}), which is
irreducible and birational to $F$ from Lemma \ref{lem:tFXirred}. Note that as $S$ is singular (non-normal even), $\tF$
is not isomorphic to the blowup of $F$ at $S$, but is rather defined in terms of tangent 2-planes.  We recall also that
for $Y\subset F$ a subvariety, we denote by $E_Y\subseteq\tilde{\cF}$ its inverse image in $\tilde{\cF}$, so that for
example for $S_{i,j}^{(\mu)}, S_{i,j,k}$ the irreducible components of $S$ for the Fermat, defined in the previous
section, denote by $E_{S_{i,j}^{(\mu)}}, E_{S_{i,j,k}}\subset E:=E_S$ their inverse images in $\tilde{\cF}$.

\vspace{9pt}
\begin{figure}[ht]\label{fig1}
\centering
\begin{tikzpicture}
\node [
    above right,
    inner sep=0] (image) at (0,0) {\includegraphics[width=0.3\textwidth]{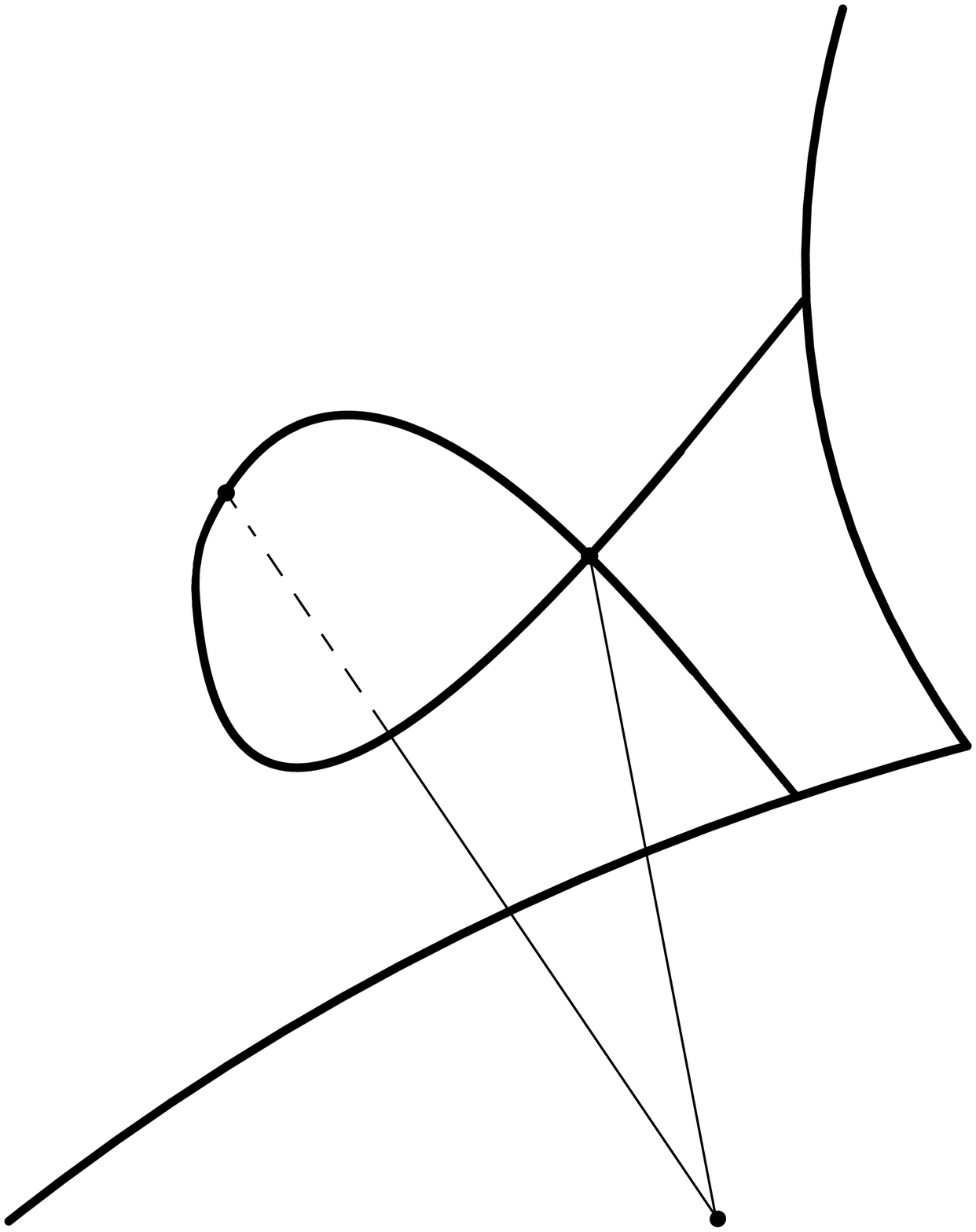}};
\begin{scope}[
    x={($0.1*(image.south east)$)},
    y={($0.1*(image.north west)$)}]

    \node[above left] at (2.2,6){$q$};
    \node[above left] at (7.4,6.7){$R_p$};
    \node[above] at (6.1,5.7){$p$};
    \node[right] at (7,2){$\ell$};
    \node[below left] at (6,1.5){$\ell'$};
    \node[right] at (9,6){$G_{i,j}$};
    \node[right] at (7.4,0.2){$p_0$};
 \end{scope}
\end{tikzpicture}
\caption{}
\end{figure}

\begin{lemma}\label{lem:imagephiSij} 
    The morphism $\phi$ contracts the threefold $E_{S_{i,j}^{(\mu)}}$ onto the surface $S_{i,j}^{(\mu)}$ in $F$. 
\end{lemma}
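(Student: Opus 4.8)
The plan is to use the full symmetry group of the Fermat to reduce to $i=0,\ j=1$, fix $\mu$, and write $p:=p^{(\mu)}_{0,1}=[1:\omega:0:0:0:0]$ with $\omega^3=-1$ for the corresponding Eckardt point. A point of $S^{(\mu)}_{0,1}\cong G_{0,1}$ is a ruling $\ell=\mathrm{Join}(q,p)$ with $q=[0:0:b_2:\dots:b_5]\in G_{0,1}$, and a point of $E_{S^{(\mu)}_{0,1}}$ is such an $\ell$ together with a $2$-plane $\Pi$ tangent along $\ell$. Since $\dim E_{S^{(\mu)}_{0,1}}=3$ while $\dim S^{(\mu)}_{0,1}=2$, it suffices to prove that $\phi(\ell,\Pi)=\ell'$ is again a ruling through $p$, so that $\phi(E_{S^{(\mu)}_{0,1}})\subseteq S^{(\mu)}_{0,1}$, and that this image is all of $S^{(\mu)}_{0,1}$.

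First I would pin down the fibre $E_\ell$. Writing $\ell(s:t)=[s:\omega s:tb_2:\dots:tb_5]$ and $f=\sum_i x_i^3$, one has $\nabla X(\ell(s:t))=s^2 n_1+t^2 n_2$ with $n_1=(1,\omega^2,0,0,0,0)$ and $n_2=(0,0,b_2^2,\dots,b_5^2)$, so $\bigcap_{x\in\ell}T_xX$ is the $\PP^3$ cut out by $n_1,n_2$; a $2$-plane is tangent to $X$ along $\ell$ exactly when it lies in this $\PP^3$, and these form the expected pencil of planes through $\ell$. The key computation is then that $X\cap\PP^3$ is a cone with vertex $p$: in a basis $p,q,w_1,w_2$ of the $\PP^3$ the first two coordinates of a point $\alpha p+\beta q+\gamma_1 w_1+\gamma_2 w_2$ are $(\alpha,\omega\alpha)$, contributing $\alpha^3(1+\omega^3)=0$ to $f$, so $f$ restricted to $\PP^3$ depends only on the last four coordinates and hence is independent of the vertex direction $\alpha$. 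Its base curve is $\Gamma=G_{0,1}\cap T_qG_{0,1}$, the tangent-plane section of the Fermat surface at $q$, which is singular at $q$.

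Granting this, for any $\Pi$ in the pencil the image line $\ell_\Pi$ of $\Pi$ in the base plane passes through the singular point $q$ of $\Gamma$, so $\ell_\Pi$ meets $\Gamma$ in $2q+r$; taking the cone over this shows $X\cap\Pi=2\ell+\ell'$ with $\ell'=\mathrm{Join}(r,p)$ again a ruling through the Eckardt point. Concretely, writing $\Pi=\langle p,q,w\rangle$ and expanding gives $f|_\Pi=\gamma^2\bigl(3\beta\sum_{i\geq2} b_iw_i^2+\gamma\sum_{i\geq2} w_i^3\bigr)$, which exhibits the double line $\ell=\{\gamma=0\}$ and the residual $\ell'$ explicitly and confirms $\phi(\ell,\Pi)=\ell'\in S^{(\mu)}_{0,1}$. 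Hence $\phi(E_{S^{(\mu)}_{0,1}})\subseteq S^{(\mu)}_{0,1}$.

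Finally, for surjectivity I would let $q$ (equivalently $\ell$) vary: for fixed $\ell$ the residuals $\ell'$ sweep out the whole tangent section $\Gamma=G_{0,1}\cap T_qG_{0,1}$ as $\Pi$ runs through the pencil, and as $q$ ranges over $G_{0,1}$ these tangent sections cover $G_{0,1}$, so the image is the full surface $S^{(\mu)}_{0,1}$. Since an irreducible threefold maps onto a surface, $\phi$ contracts $E_{S^{(\mu)}_{0,1}}$, with one-dimensional general fibre. The main obstacle I anticipate is the bookkeeping that converts the abstract tangency condition into this cone picture: verifying that tangency along $\ell$ is precisely membership in $\bigcap_{x\in\ell}T_xX$, and that the base curve $\Gamma$ is singular at $q$, so that every plane of the pencil genuinely produces the double line $2\ell$ rather than three distinct lines. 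Once the Eckardt cone structure is in place the surjectivity is a soft dimension count.
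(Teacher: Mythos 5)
Your argument is correct and is essentially the paper's own proof: both identify the pencil of tangent $2$-planes along $\ell$ with the planes containing $\ell$ inside the $\PP^3$ cut out by the gradients at the Eckardt point and at $q\in G_{i,j}$, observe that $X$ meets this $\PP^3$ in a cone with vertex the Eckardt point over the singular tangent section $G_{i,j}\cap T_qX$, and conclude that every residual line is again a ruling through the Eckardt point, with surjectivity because these tangent sections sweep out $G_{i,j}$. Your version merely makes the cone structure and the factorisation $f|_\Pi=\gamma^2(\cdots)$ explicit where the paper argues synthetically with the curve $R_p$.
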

\begin{proof}
    We refer to Figure 1. Let $[\ell]\in S^{(\mu)}_{i,j}$ be a general point with $\ell = \overline{p_0p}$,
    $p_0=p^{(\mu)}_{i,j}$, $p\in G_{i,j}$. Then $R_{p} =T_{p}X\cap G_{i,j}$ is a cubic curve with a nodal singularity at
    $p$. As $\mathrm{Join}(p_0, G_{i,j})\subset T_{p_0}X$ and $\ell\subset T_pX$, we obtain \[\mathrm{Join}(p_0,
    R_{p})\subset T_{p}X\cap T_{p_0}X=H\cong {\mathbb P}^3.\] Then $R_{p}$ parametrises 2-planes in $H$ which contain
    $\ell$. Indeed, each such plane $\Pi$ intersects $R_{p}$ at a unique point $q$ (other that $p$) and then $\phi(\ell,
    \Pi)=\overline{p_0q}$. Therefore \[\phi(E_{S^{(\mu)}_{i,j}})\subset S^{(\mu)}_{i,j}.\] In fact it is easy to see that
    equality holds, since the smooth points of the curves $R_p$ for $p\in G_{i,j}$ cover the whole $G_{i,j}$.
\end{proof}

\vspace{10pt}
\begin{figure}[ht]\label{fig2}
\centering
\begin{tikzpicture}
\node [
    above right,
    inner sep=0] (image) at (0,0) {\includegraphics[trim={0 0 0 1cm},clip,width=0.55\textwidth]{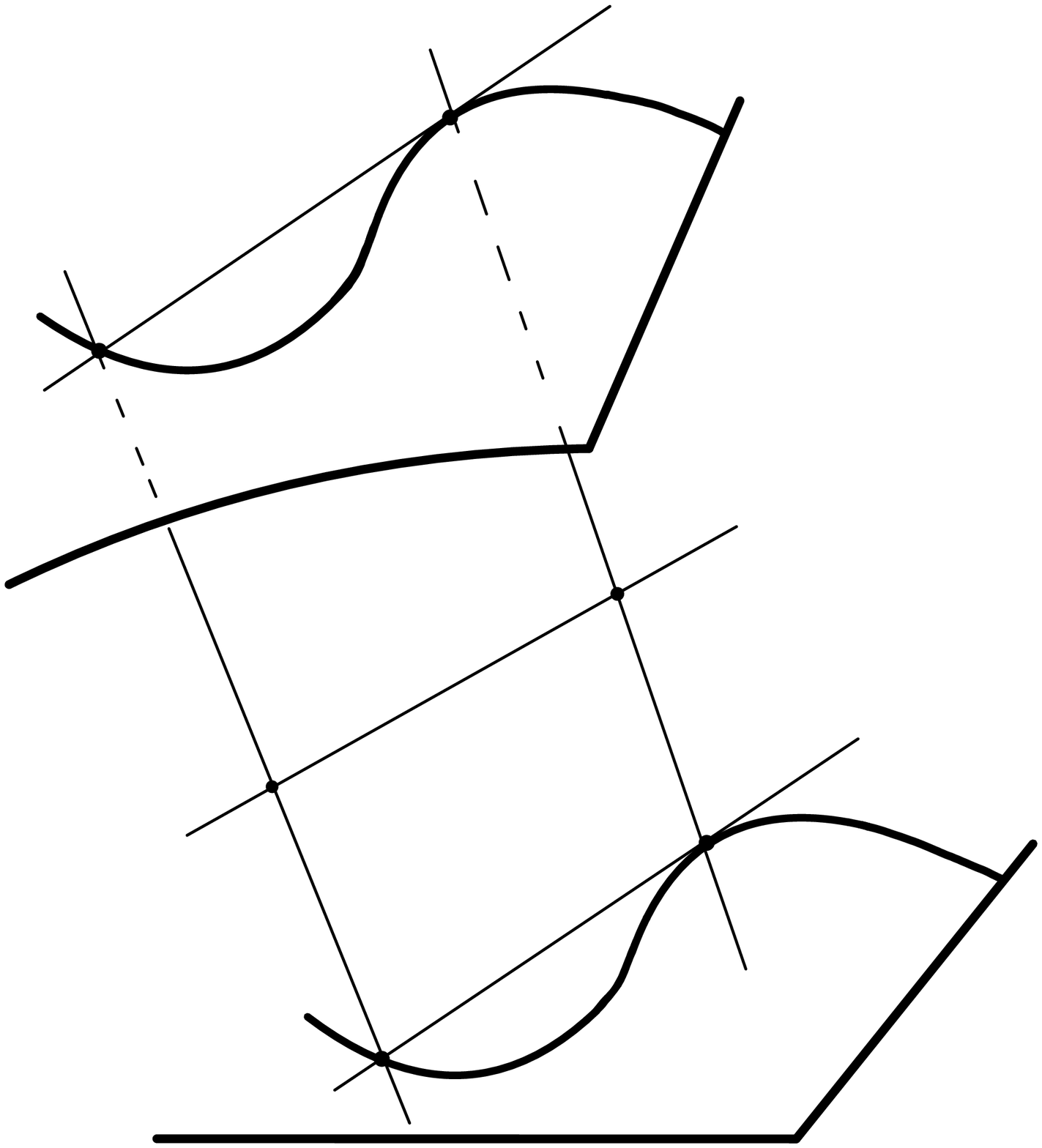}};
\begin{scope}[
    x={($0.1*(image.south east)$)},
    y={($0.1*(image.north west)$)}]

    \node[above] at (1.1,7.4){$p'$};
    \node[above] at (4.5,9.6){$p$};
    \node[above left] at (2.6,8.2){$T_pC_1$};
    \node[below] at (6,9.5){$C_1$};
    \node[right] at (6.2,7){$\Pi_1$};
    \node[right] at (8.4,0.6){$\Pi_2$};
    \node[right] at (7,5.2){$\ell_A'$};
    \node[above] at (6.1,5.2){$B$};
    \node[left] at (2.5,3.4){$A$};
    \node[left] at (2.9,2){$\tilde{\ell}$};
    \node[left] at (6.2,3.8){$\ell$};
    \node[above] at (6.9,3){$q$};
    \node[above] at (3.9,1){$q'$};
    \node[above] at (5,1.9){$T_qC_2$};
    \node[below] at (8.2,2.8){$C_2$};
 \end{scope}
\end{tikzpicture}
\caption{}
\end{figure}

We now discuss the restriction of the map $\phi$ to $S_{i,j,k}$, where we refer to Figure 2. Without loss of generality
we may assume that $i=0, j=1, k=2$. Let 
\begin{align*}
\Pi_1&=V(x_3, x_4, x_5)\cong {\mathbb P}^2_{(x_0,x_1,x_2)}\\
\Pi_2&=V(x_0, x_1, x_2)\cong {\mathbb P}^2_{(x_3,x_4,x_5)}. 
\end{align*}
A $p\in {\mathbb P}^5\backslash \Pi_1\cup \Pi_2$ can be written as
$p=[x: y]$ with $[x]\in \Pi_1$ and $[y]\in \Pi_2$. 
Geometrically, this means that $p$ is in the unique line joining $[x]\in \Pi_1$ with $[y]\in \Pi_2$ and also that the
ruling lines of $\mathrm{Join}(\Pi_1, \Pi_2)$ do not intersect at points outside $\Pi_1, \Pi_2$.
To simplify notation let 
\begin{align}\label{eq:Ci}
\begin{split}
C_1&=C_{0,1,2}=X\cap \Pi_1 = V(x_0^3+x_1^3+x_2^3)\subset\Pi_1\\
C_2&=C_{3,4,5}=X\cap \Pi_2 = V(x_3^3+x_4^3+x_5^3)\subset\Pi_2.
\end{split}
\end{align}
Note that $p=[x:y]\in \mathrm{Join}(C_1,C_2)$ (outside $C_1$ and $C_2$) if and only if $[x]\in C_1$ and $[y]\in C_2$. 

Let $p=[x_0:x_1:x_2]\in C_1$. We have 
\begin{align*}
\nabla_pX&=3\langle x_0^2,x_1^2,x_2^2,0,0,0\rangle\\
\nabla_pC_1&=3\langle x_0^2,x_1^2,x_2^2\rangle
\end{align*}
giving $T_pX|_{\Pi_1}=T_pC_1$. Similarly, for $q\in C_2$ we have $T_qX|_{\Pi_2}=T_qC_2$. 

Let now $\ell=\overline{pq}$, with $p, q$ general points of $C_1$, resp.\ $C_2$, a ruling line
of $\mathrm{Join}(C_1,C_2)$ as in Figure 2. The tangent line $T_pC_1$ intersects $C_1$ at an additional point $p'$ and,
similarly $T_qC_2$ intersects $C_2$ at an additional point $q'$. Since $p, q$ are general, $p'\neq p$ and $q'\neq q$. We
call the line $\tilde{\ell}=\overline{p'q'}$ the \textit{adjoint} line to $\ell$. We now have that $\ell, \overline{pq'},
T_pC_1\subset T_pX$ and $\ell, \overline{qp'}, T_qC_2\subset T_qX$. Then $p,q,p',q' \in T_pX\cap T_qX=H$ and since
$\ell, \tilde{\ell}$ are skew lines we have that $H\cong {\mathbb P}^3$ is spanned by the above points (note that
$T_pX\neq T_qX$ because the perp vectors at these two points are different). Then the points of the adjoint line
$\tilde{\ell}$ parametrise 2-planes in $H$ which contain $\ell$. For any $A\in \tilde{\ell}$, we denote by $\Pi_A$ the
plane spanned by $(p,q,A)$ and we have 
\begin{equation}\label{eq:ell'}
    \phi(\ell,\Pi_A)=\ell'_A,
\end{equation}
with $\ell'_A$ a line in $X$ which passes through $A$ and intersects $\ell$ at a point $B$. 

\begin{remark}\label{rem:pijflex}
    The point \[p^{(\mu)}_{i,j}\in C_{i,j,k}\] is a flex point and a ruling line $\ell$ of $\mathrm{Join}(C_1,C_2)$
    which passes through it is necessarily contained in $S_{i,j}^{(\mu)}$. Its image under $\phi$ is given as in Lemma
    \ref{lem:imagephiSij}. 
\end{remark}

\begin{lemma}\label{lem:imESij}
    For $0\leq i,j,i',j',k'\leq5$ distinct and $1\leq\mu\leq3$ we have \[\phi(E_{S^{(\mu)}_{i,j}}) \subset
    \phi(E_{S_{i',j',k'}}).\]
\end{lemma}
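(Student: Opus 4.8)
Since Lemma~\ref{lem:imagephiSij} identifies $\phi(E_{S^{(\mu)}_{i,j}})$ with the surface $S^{(\mu)}_{i,j}$, it suffices to prove the inclusion $S^{(\mu)}_{i,j}\subset\phi(E_{S_{i',j',k'}})$. The plan is to find a curve of lines lying in \emph{both} second type components $S^{(\mu)}_{i,j}$ and $S_{i',j',k'}$, and then to use the fibrewise description of the Voisin map from Lemma~\ref{lem:imagephiSij} along this curve to sweep out all of $S^{(\mu)}_{i,j}$ inside the image $\phi(E_{S_{i',j',k'}})$.

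First I would set up the overlap. Write $p_0=p^{(\mu)}_{i,j}$ and let $k''$ be the unique index with $\{i,j,k''\}\cup\{i',j',k'\}=\{0,\dots,5\}$. Unwinding \eqref{eq:Cijk} and \eqref{eq:Gij} gives two compatibilities: on one hand $p_0\in C'_{i',j',k'}=C_{i,j,k''}$, where it is a flex by Remark~\ref{rem:pijflex} (its only nonzero coordinates are $x_i=1$ and $x_j=\omega$ with $\omega^3=-1$, and $V(x_{i'},x_{j'},x_{k'})$ contains it); on the other hand $C_{i',j',k'}=G_{i,j}\cap V(x_{k''})$ is a hyperplane section, so $C_{i',j',k'}\subset G_{i,j}$. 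Hence for every $p\in C_{i',j',k'}$ the ruling line $\ell_p=\overline{p\,p_0}$ lies in $S_{i',j',k'}$ (joining $p\in C_{i',j',k'}$ to $p_0\in C'_{i',j',k'}$) and simultaneously in $S^{(\mu)}_{i,j}$ (joining the vertex $p_0$ to the point $p\in G_{i,j}$). These lines trace a curve isomorphic to $C_{i',j',k'}$ inside $S_{i',j',k'}\cap S^{(\mu)}_{i,j}$.

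Next I would transport the fibre description. As $[\ell_p]\in S_{i',j',k'}$, the whole fibre $E_{\ell_p}\subset\tilde{\cF}$ of tangent $2$-planes along $\ell_p$ lies in $E_{S_{i',j',k'}}$, so $\phi(E_{\ell_p})\subset\phi(E_{S_{i',j',k'}})$. At the same time $\ell_p$ is a ruling line of $\mathrm{Join}(G_{i,j},p_0)$, so the computation in the proof of Lemma~\ref{lem:imagephiSij}, with vertex $p_0$ and base point $p$, applies: the planes tangent along $\ell_p$ are cut out by the cubic $R_p=T_pX\cap G_{i,j}$, and $\phi$ sends the plane meeting $R_p$ at $q$ to the line $\overline{p_0q}$. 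Although $\ell_p$ traces only a curve inside the surface $S^{(\mu)}_{i,j}$, this computation is local around $\ell_p$ and needs only that $R_p$ be a cubic with a node at $p$, which holds for general $p\in C_{i',j',k'}$. Thus $\phi(E_{\ell_p})=\{\overline{p_0q}:q\in R_p\}\subset\phi(E_{S_{i',j',k'}})$.

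Finally I would sweep and conclude. Unioning over $p$ gives $\bigcup_p\phi(E_{\ell_p})=\{\overline{p_0q}:q\in\bigcup_pR_p\}$, so everything reduces to showing that the cubics $R_p$ cover $G_{i,j}$. For fixed $g\in G_{i,j}$ the incidence $g\in R_p$ is the polar condition $\sum_k p_k^2g_k=0$, which on $C_{i',j',k'}\subset\PP^2_{(x_{i'},x_{j'},x_{k'})}$ reads $g_{i'}p_{i'}^2+g_{j'}p_{j'}^2+g_{k'}p_{k'}^2=0$, i.e.\ the intersection of the plane cubic $C_{i',j',k'}$ with a conic; by B\'ezout this is nonempty, so $\bigcup_pR_p=G_{i,j}$. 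Therefore $\bigcup_p\phi(E_{\ell_p})=\{\overline{p_0q}:q\in G_{i,j}\}=S^{(\mu)}_{i,j}$, and since $\phi(E_{S_{i',j',k'}})$ is closed (being the image under the morphism $\phi$ of a projective variety), this dense inclusion extends to $S^{(\mu)}_{i,j}\subset\phi(E_{S_{i',j',k'}})$. The one delicate point — the main obstacle — is precisely this covering step: one must ensure that for general $g$ the solutions $p$ of the conic-cubic intersection are general enough points of $C_{i',j',k'}$ that $R_p$ is genuinely nodal at $p$, so that the sweep reproduces the generic, and hence by closure every, ruling line of $S^{(\mu)}_{i,j}$.
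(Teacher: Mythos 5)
Your proof is correct and follows essentially the same route as the paper: after reducing via Lemma~\ref{lem:imagephiSij} to showing $S^{(\mu)}_{i,j}\subset\phi(E_{S_{i',j',k'}})$, both arguments work with the lines $\overline{p_0p}$ for $p\in C_{i',j',k'}=G_{i,j}\cap V(x_{k''})$, which lie in $S_{i',j',k'}\cap S^{(\mu)}_{i,j}$, and the paper's choice of $p$ with $q'\in T_pC_{i',j',k'}$ is exactly your polar condition $\sum_k p_k^2g_k=0$ read in the opposite direction (``given the target line, solve for $p$'' versus ``sweep over $p$ and invoke B\'ezout''). One correction to the point you single out as the main obstacle: for $p\in C_{i',j',k'}$ the curve $R_p$ is \emph{never} nodal at $p$ --- the paper observes that $C_{i',j',k'}$ lies in the Hessian $V(x_{i'}x_{j'}x_{k'}x_{k''})$ of $G_{i,j}$, so $R_p$ has a cusp at $p$. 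This does not damage the argument: the parametrisation of the pencil of tangent planes along $\ell_p$ by $R_p$, and hence the formula $\phi(\ell_p,\Pi_q)=\overline{p_0q}$, only requires $R_p$ to be a cubic with a double point at $p$, not specifically a node. So your ``delicate point'' is moot rather than genuinely delicate, but as written the claim that nodality ``holds for general $p\in C_{i',j',k'}$'' is false and should be replaced by the double-point statement.
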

\begin{proof}
    Without loss of generality, we assume that $(i',j',k')=(0,1,2)$ and that $(i,j)=(4,5)$, $\mu=1$. Then
    $p_0:=p^{(1)}_{4,5}=[0:\cdots:0:1:-1]$. We will show that \[\phi(E_{S^{(0)}_{4,5}}) \subset \phi(E_{S_{0,1,2}}).\] Let
    $G=G_{4,5}$ and $C_1=C_{0,1,2}=\Pi_1\cap G$. As we have seen in Lemma \ref{lem:imagephiSij},
    \[\phi(E_{S^{(0)}_{4,5}})\subset S^{(0)}_{4,5}\] so it remains to show that $S^{(0)}_{4,5}\subset \phi(E_{S_{0,1,2}})$.
    For $[\ell'=\overline{p_0q}]\in S^{(0)}_{4,5}$ we have $q=[b_0:\cdots:b_3:0:0]\in G$, and let
    $q'=[b_0:b_1:b_2:0:0:0] \in \Pi_1$ be the point of intersection of $\ell'$ with $\Pi_1$. Let $p\in C_1$ with
    $\overline{q'p}=T_pC_1\subset \Pi_1$. Note that if $q=[x:y]\in G$ with $[x]\in T_pC_1$ then $q\in G\cap T_pX=R_p$:
    indeed, if $p=[a_0:a_1:a_2:0:0:0] $ then $\nabla_pX=3\langle a_0^2,a_1^2,a_2^2,0,0,0\rangle$ and therefore a point
    $q=[b_0:\cdots:b_3:0:0]\in G$ lies in $R_p$ if and only if $a_0^2b_0+a_1^2b_1+a_2^2b_2=0$, i.e., the point
    $[b_0:b_1:b_2:0:0:0]$ lies in $T_pC_1$, therefore $q'\in T_pC_1$. Note here that $C_1$ is contained in the Hessian
    of $G\subset\PP^3$ (given by $x_0x_1x_2x_3=0$) and so $R_p$ has a cuspidal singularity at $p$. By what we have
    described in Remark \ref{rem:pijflex}, if $\ell=\overline{p_0p}$ then $\phi (\ell, \Pi_q)=\ell'$ with $\Pi_q=\langle
    p_0pq\rangle$, which proves the claim.  Finally note that $[\ell]\in S_{0,1,2} \cap S^{(0)}_{4,5}$. 
\end{proof}

\begin{lemma}\label{lem:imSijk}
    The morphism $\phi$ is generically one-to-one when restricted to $E_{S_{i,j,k}}$.  Moreover, the images
    $\phi(E_{S_{i,j,k}})$ are all different from one another for different $S_{i,j,k}$.
\end{lemma}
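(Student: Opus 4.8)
The plan is to fix, using the evident symmetry of the Fermat equation, the component $(i,j,k)=(0,1,2)$ and to work in the coordinates of the preceding discussion, so that $C_1=X\cap\Pi_1\subset\Pi_1=V(x_3,x_4,x_5)$ and $C_2=X\cap\Pi_2\subset\Pi_2=V(x_0,x_1,x_2)$ are Fermat cubic curves and $V_{0,1,2}=\mathrm{Join}(C_1,C_2)$. A point of $E_{S_{0,1,2}}$ is a pair $(\ell,\Pi_A)$ with $\ell=\overline{pq}$ a ruling line ($p=[a]\in C_1$, $q=[b]\in C_2$) and $A=sp'+uq'$ a point of the adjoint line $\tilde\ell=\overline{p'q'}$, where $p'=[a'],q'=[b']$; thus $E_{S_{0,1,2}}$ is a $\PP^1$-bundle over $S_{0,1,2}\cong C_1\times C_2$, and $\phi(\ell,\Pi_A)=\ell'_A$ as in \eqref{eq:ell'}. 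The goal is to produce a rational inverse to $\phi$ on this component.

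The key step is an explicit computation of $\ell'_A$ and of its contact with $V_{0,1,2}$. Writing a point of $\Pi_A=\langle p,q,A\rangle$ as $(\alpha a+\gamma s a',\,\beta b+\gamma u b')$, setting $\langle u^2,v\rangle:=\sum_i u_i^2 v_i$, and using the three relations $g_1(a)=g_1(a')=0$ and $\langle a^2,a'\rangle=0$ coming from $p,p'\in C_1$ and $p'\in T_pC_1$ (together with their analogues on $C_2$), I expect the Fermat form $f$ to restrict to $\Pi_A$ as
\[
    f|_{\Pi_A}=3\gamma^2\bigl(\k_1 s^2\alpha+\k_2 u^2\beta\bigr),\qquad \k_1=\langle a'^2,a\rangle,\ \k_2=\langle b'^2,b\rangle,
\]
with $\k_1,\k_2$ nonzero for general $(p,q)$. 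This recovers $2\ell$ as the locus $\gamma=0$ and identifies the residual line $\ell'_A=V(\k_1 s^2\alpha+\k_2 u^2\beta)\subset\Pi_A$. Substituting the resulting parametrisation of $\ell'_A$ back into the cubic form $g_1=x_0^3+x_1^3+x_2^3$ should then give $g_1|_{\ell'_A}=3\k_1\k_2 s^2u^2\,rw^2$ in a suitable coordinate $[r:w]$ on $\ell'_A$, i.e.\ the canonical decomposition
\[
    \ell'_A\cdot V_{0,1,2}=2B+A,
\]
where $B=\ell\cap\ell'_A$ is the point of double contact and $A$ the residual (transverse) point. This identity is the heart of the argument.

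Granting it, generic injectivity is immediate: for a general $\ell'$ in the image the degree-three divisor $\ell'\cdot V_{0,1,2}$ on $\ell'\cong\PP^1$ has a unique point $B$ of multiplicity two and a unique residual point $A$, both canonically attached to $\ell'$. Since $B$ lies off $C_1\cup C_2$, it determines the unique ruling line $\ell=\overline{pq}$ of $V_{0,1,2}=\mathrm{Join}(C_1,C_2)$ through it, hence $\ell$, while $A$ together with $\ell$ determines $\Pi_A=\langle\ell,A\rangle$ (note $A\notin\ell$, as $\tilde\ell$ is skew to $\ell$). Thus $\ell'\mapsto(\ell,\Pi_A)$ is a rational inverse of $\phi$ on $E_{S_{0,1,2}}$, and since $E_{S_{0,1,2}}$ is irreducible the map is generically one-to-one, birational onto its image $\phi(E_{S_{0,1,2}})$, which is therefore an irreducible divisor in $F$.

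For the distinctness, the same decomposition shows that every $\ell'\in\phi(E_{S_{i,j,k}})$ meets $V_{i,j,k}$ non-transversally, so $\phi(E_{S_{i,j,k}})$ is contained in the tangency locus $D_{i,j,k}=\{[\ell']\in F:\ell'\text{ is tangent to }V_{i,j,k}\}$. It then suffices to show that a general member of $\phi(E_{S_{i,j,k}})$ is transverse to each of the other nine hypersurfaces $V_{i',j',k'}$, i.e.\ that $\phi(E_{S_{i,j,k}})\not\subseteq D_{i',j',k'}$; for then the triple $(i,j,k)$ is recovered from the general member as the unique one to which it is tangent, and the images are pairwise distinct. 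Since the condition cutting out $\phi(E_{S_{i,j,k}})$ refers only to $V_{i,j,k}$, a general line of this family should meet the unrelated cubic section $V_{i',j',k'}$ in three distinct points, and this can be confirmed by a single explicit specialisation of $(p,q,A)$, after which openness propagates transversality to the general member. The main obstacle is the bookkeeping behind the displayed decomposition — in particular verifying the vanishing $\langle a^2,a'\rangle=0$ and the non-vanishing of $\k_1,\k_2$ for general ruling lines — together with the transversality check underlying distinctness; once the decomposition is in hand, both conclusions follow formally.
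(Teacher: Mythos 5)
Your argument for the first claim is correct but follows a more computational route than the paper's. The paper argues synthetically: writing a point of $\Pi_A$ as $[x:y]$, it observes that $[x]\in T_pC_1$ and $[y]\in T_qC_2$, so the only ruling lines of $\mathrm{Join}(C_1,C_2)$ that could also map to $\ell'$ are $\ell$ and its adjoint $\tilde{\ell}$, and $\tilde{\ell}$ is excluded because it would force $\overline{pp'}$ to be a bitangent of the smooth cubic $C_1$. Your explicit restriction $f|_{\Pi_A}=3\gamma^2(\kappa_1 s^2\alpha+\kappa_2 u^2\beta)$ checks out (the vanishings $g_1(a)=g_1(a')=\langle a^2,a'\rangle=0$ do hold, and $\kappa_1,\kappa_2\neq 0$ for exactly the same no-bitangent reason), as does $g_1|_{\ell'_A}=3\kappa_1\kappa_2 s^2u^2\,rw^2$; the resulting decomposition $\ell'_A\cdot V_{0,1,2}=2B+A$ is in fact sharper than what the paper records, since it recovers $(\ell,\Pi_A)$ from $\ell'$ in one stroke (unique double point $B$ gives $\ell$, the span $\langle\ell,\ell'\rangle$ gives $\Pi_A$) and it also yields immediately the ``at most two points of intersection'' fact that the paper extracts from its first claim. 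So for injectivity your proof is complete and arguably cleaner.

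For distinctness, however, your argument is not finished: you reduce to the assertion that a general member of $\phi(E_{S_{0,1,2}})$ meets $V_{0,2,3}$ transversally in three points, and then defer this to ``a single explicit specialisation'' without carrying it out. That specialisation is precisely the substantive content of the paper's proof of the second claim: it takes $\ell'=[s:-s:ta_2:\cdots:ta_5]$, a general line of $S^{(\mu)}_{4,5}$, which lies in $\phi(E_{S_{0,1,2}})$ by Lemma \ref{lem:imESij}, and checks that its intersection with $V_{0,2,3}$ is the three distinct points $(s/t)^3=a_4^3+a_5^3$. The difficulty with your plan as stated is that a truly general $\ell'_A$ is awkward to parametrise explicitly; the efficient fix --- and the one the paper uses --- is to test transversality on the special but still admissible lines of $S^{(\mu)}_{i,j}$ contained in the (irreducible) image, and then spread out by openness exactly as you propose. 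With that computation supplied, your argument closes; without it, the second claim is asserted rather than proved.
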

\begin{proof}
    Given a general $\ell':=\ell'_A$ as in Equation \eqref{eq:ell'} (see also Figure 2), we first observe that in case
    $\ell'$ is not one of the lines which pass through  $p, p', q, q'$ then the only lines in $S_{i,j,k}$ which
    intersect $\ell' $ are the lines $\ell$ and its adjoint $\tilde{\ell}$: Indeed, if $[x:y]\in \ell'$ then $[x]\in
    T_pC_1$ and $[y]\in T_qC_2$, so since $T_pC_1$ intersects $C_1$ only at the points $p, p'$ and $T_qC_2$ intersects
    $C_2$ only at the points $q, q'$, the only other possible candidate line with image $\ell'$ under the map $\phi$ is
    the line $\tilde{\ell}$, in which case $T_pC_1$ would also be tangent to $C_1$ at $p'$, a contradiction since $C_1$
    is a cubic curve. 

    For the second claim, without loss of generality we show that $\phi(E_{S_{0,1,2}})\neq \phi(E_{S_{0,2,3}})$. We
    have seen in Lemma \ref{lem:imESij} that $S^{(0)}_{4,5}\subset \phi(E_{S_{0,1,2}})$. For a general $[\ell'] \in
    S^{(0)}_{4,5}$ we will show that $ [\ell']\notin \phi(E_{S_{0,2,3}})$. Indeed, $\ell'=[s,-s, ta_2,\cdots,ta_5]$,
    with $a_2^3+\ldots+a_5^3=0$. Then $\ell'$ intersects $V_{0,2,3}$ at the points $[s,t]$ satisfying the system
    $s^3+(ta_2)^3+(ta_3)^3=0$ and $(-s)^3+(ta_4)^3+(ta_5)^3=0$. By the above relation these two equations are equivalent
    and hence the intersection points correspond to $[s,t]$ with $(s/t)^3=a_4^3+a_5^3$. This has, generically, three
    distinct solutions. But from what we have seen for the first claim above, in order for $[\ell']$ to be in
    $\phi(E_{S_{0,1,2}})$ it has to intersect $V_{0,2,3}$ at two points only, or infinitely many. Hence the general such
    $[\ell']\notin \phi(E_{S_{0,2,3}})$, which proves the claim.
\end{proof}

In other words, the $E_{S_{i,j,k}}$ map generically one-to-one onto their images in $F$, whereas the
$E_{S_{i,j}^{(\mu)}}$ are contracted onto $S_{i,j}^{(\mu)}\subset\phi(E_{S_{i,j,k}})$. Hence, overall, we have a union
of divisors
\[\phi(E_S) =\bigcup_{i,j,k} \phi(E_{S_{i,j,k}}).\] 

\begin{proposition}\label{prop:360}
    Let $p\in X$ be a general point on the Fermat cubic fourfold. Then there are 360 distinct lines passing through $p$,
    each of which is a residual to some line of second type under the Voisin map.
\end{proposition}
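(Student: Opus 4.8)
The plan is to reduce the count to the ten components $S_{i,j,k}$ and to obtain a contribution of $36$ lines from each. By the identity $\phi(E_S)=\bigcup_{i,j,k}\phi(E_{S_{i,j,k}})$ recorded just before the statement, every line residual to a second type line lies in one of the ten divisors $\phi(E_{S_{i,j,k}})\subset F$. The components $S^{(\mu)}_{i,j}$ produce nothing new through a general point: by Lemma \ref{lem:imagephiSij} the map $\phi$ contracts $E_{S^{(\mu)}_{i,j}}$ onto the surface $S^{(\mu)}_{i,j}$, so the residual lines it produces form a two dimensional family sweeping out only a threefold in $X$, which a general $\xi\in X$ avoids (and by Lemma \ref{lem:imESij} these are in any case absorbed into the $\phi(E_{S_{i,j,k}})$). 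Thus I would count, for each component, the residual lines through $\xi$; by the symmetry of the Fermat it suffices to treat $S_{0,1,2}$ and multiply by ten.

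For $S_{0,1,2}$ I keep the notation of \eqref{eq:Ci} and of the discussion preceding Lemma \ref{lem:imSijk}: a ruling line is $\ell=\overline{pq}$ with $p\in C_1$, $q\in C_2$, and the residual lines $\ell'_A$ of \eqref{eq:ell'} all lie in $H=T_pX\cap T_qX\cong\PP^3$ as $A$ ranges over the adjoint line $\tilde\ell$. The key reduction is the equivalence: $\xi$ lies on a residual line arising from $S_{0,1,2}$ if and only if there exist $p\in C_1$, $q\in C_2$ with $\xi\in T_pX\cap T_qX$. Necessity is immediate, since $\ell'_A\subset\Pi_A\subset H$. For sufficiency, given such $p,q$ the line $\ell=\overline{pq}$ lies on $X$ (the parametrisation $\lambda p+tq$ annihilates $\sum_i x_i^3$) and is therefore a ruling line; the plane $\Pi_A=\langle\ell,\xi\rangle\subset H$ meets $\tilde\ell$ in a unique point $A$, and since $X\cap\Pi_A=2\ell+\ell'_A$ with $\xi\in X\cap\Pi_A$ and $\xi\notin\ell$ for general $\xi$, we conclude $\xi\in\ell'_A$. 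Note also that $A$, hence $\ell'_A$, is determined by $(p,q)$, and that $T_pX\neq T_qX$ always, so $H$ is genuinely a $\PP^3$.

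It then remains to count the pairs $(p,q)$. Writing $p=[p_0:p_1:p_2]\in C_1$ we have $\nabla_pX=3\langle p_0^2,p_1^2,p_2^2,0,0,0\rangle$, so the condition $\xi\in T_pX$ reads $p_0^2\xi_0+p_1^2\xi_1+p_2^2\xi_2=0$; this defines a conic in $\PP^2_{(x_0,x_1,x_2)}$ which meets the cubic $C_1$ in $6$ points by B\'ezout, transversally for general $\xi$. Symmetrically there are $6$ choices of $q\in C_2$, giving $36$ pairs and thus $36$ residual lines through $\xi$ from $S_{0,1,2}$. Summing over the ten components yields $10\cdot36=360$.

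The last task is to confirm distinctness. Within $S_{0,1,2}$ the $36$ lines are pairwise distinct for general $\xi$ because $\phi$ is generically injective on $E_{S_{0,1,2}}$ (Lemma \ref{lem:imSijk}) and distinct pairs $(p,q)$ give distinct ruling lines $\ell$; across components the divisors $\phi(E_{S_{i,j,k}})$ are pairwise distinct, again by Lemma \ref{lem:imSijk}, so for general $\xi$ no line is counted twice. The main obstacle is exactly this genericity bookkeeping: proving the displayed equivalence cleanly, ensuring the B\'ezout intersection is transverse (no tangencies, and no degenerate or reducible $\ell'_A$), and excluding coincidences among the $360$ lines. Each of these should follow by removing a suitable proper closed subset from the locus of admissible $\xi$, but they must be checked in order to make the enumeration exact rather than a mere upper bound.
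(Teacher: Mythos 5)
Your argument is correct and follows essentially the same route as the paper: the components $S^{(\mu)}_{i,j}$ are discarded because their residual lines sweep out only a threefold, and each $S_{i,j,k}$ contributes $6\times 6=36$ lines through a general point (your polar-conic count of points $p\in C_1$ with $\xi\in T_pX$ is exactly the paper's count of tangent lines from $[x]$ to $C_1$), giving $10\cdot 36=360$. You are in fact somewhat more careful than the paper, which asserts the $36$ without spelling out the equivalence via $H=T_pX\cap T_qX$ and does not address distinctness of the $360$ lines at all.
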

\begin{proof}
    Let $p=[x:y]$ for $[x]\in \Pi_1$ and $[y]\in \Pi_2$ (with notation as in Equation \eqref{eq:Ci}). Then to each pair
    of a tangent line from $[x]$ to $C_1$ (there are 6 such choices) and a tangent line from $[y]$ to $C_2$ (also 6
    choices) we have that $[x:y]$ is contained in a line of $\phi(E_{S_{0,1,2}})$, giving 36 such choices of lines
    overall. Since we have 10 components $\phi(E_{S_{i,j,k}})$ the total number of lines in $\phi(E_S)$ which pass
    through $p$ are 360, noting that since $S^{(\mu)}_{i,j}$ is 2-dimensional, the span of such lines in $X$ is
    3-dimensional so there are no lines in these loci passing through a general point $p$.
\end{proof}

In the following $H\in\Pic F$ will denote the Pl\"ucker polarisation as usual.

\begin{corollary}\label{cor:phiESFermat}
    For $\phi:\tF=\tilde{\cF}_X\to F$ and $E_S\subset \tF$ on the Fermat $X$, we have that $[\phi(E_S)]=60H$ in $\Pic
    F$.
\end{corollary}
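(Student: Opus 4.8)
The plan is to determine the class $[\phi(E_S)]$ by intersecting the divisor $\phi(E_S)\subset F$ against a single, completely understood curve: the curve $C_x\subset F$ of lines of $X$ through a general point $x\in X$ (the image under $q$ of the fibre $p^{-1}(x)$). The geometric input is exactly Proposition \ref{prop:360}, which says that a general $x$ lies on precisely $360$ of the lines making up $\phi(E_S)=\bigcup_{i,j,k}\phi(E_{S_{i,j,k}})$. These $360$ lines are distinct and arise transversally (for each of the $10$ components, the $6$ tangent lines from $[x]$ to $C_1$ pair with the $6$ from $[y]$ to $C_2$ to give $36$ distinct lines, and no line of a contracted locus $S_{i,j}^{(\mu)}$ passes through a general point), so this count is the intersection number $[\phi(E_S)]\cdot[C_x]=360$.

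Next I would compute $H\cdot[C_x]=6$. The lines of $\PP^5$ through the fixed point $x$ form a linearly embedded $\PP^4$ in the Plücker space $\PP(\wedge^2\bC^6)$, since $v\mapsto x\wedge v$ is linear; hence $H$ restricts to $\OO_{\PP^4}(1)$ there. Expanding $f$ along a line $\langle x,v\rangle$ and using $f(x)=0$, the condition that this line lie on $X$ is the vanishing of three coefficients in the direction $v$: a linear one (cutting out a hyperplane $\PP^3=\PP(T_xX/\langle x\rangle)$), a quadratic one, and a cubic one. Thus $C_x$ is a $(2,3)$ complete-intersection curve in a $\PP^3\subset\PP^4$, of degree $2\cdot3=6$, so $H\cdot[C_x]=6$.

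Dividing, the $H$-degree of $\phi(E_S)$ is $360/6=60$. To promote this to the asserted equality $[\phi(E_S)]=60H$ in $\Pic F$, I would invoke the symmetry of the Fermat. The divisor $\phi(E_S)$ is stable under the automorphism group $G$ generated by the coordinate permutations $S_6$ and the diagonal scalings $x_i\mapsto\zeta x_i$, $\zeta^3=1$: permutations permute the $V_{i,j,k}$, hence the $S_{i,j,k}$ and their images, while the scalings fix each $V_{i,j,k}=V(x_i^3+x_j^3+x_k^3)$. Since the $G$-invariant part of $N^1(F)_{\bQ}$ is spanned by the (manifestly invariant) Plücker class $H$, the invariant class $[\phi(E_S)]$ is a rational multiple of $H$, and the intersection computation pins that multiple down to $60$.

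The hard part is this last proportionality input. Unlike a general cubic fourfold, whose Fano variety has Picard rank one, the Fermat has large Picard rank, so a single intersection number does not by itself determine a divisor class; the real content is that the $G$-invariant algebraic classes on $F$ reduce to $\bQ H$, which one checks through the character decomposition of the Fermat's fourth cohomology, where the only part of $H^4(X)$ invariant under the diagonal scalings is $\bQ\,H_X^2$. A secondary point worth care is the transversality and reducedness underlying the number $360$, which is what allows Proposition \ref{prop:360} to be read as an intersection number rather than merely a count of points.
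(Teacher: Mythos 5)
Your proposal is correct but takes a genuinely different route. The paper never invokes the automorphism group of the Fermat: it imports the pushforward identity $\phi_*[E_S]=60H$ from \cite[Remark 6.4.19]{huybrechts}, notes $[C_x]\cdot q^*(60H)=360$ by \cite[Lemma 2.6]{cubicfourfolds1}, and sandwiches the reduced image between the $360$ distinct points of Proposition \ref{prop:360} and the pushforward, $360\leq [C_x]\cdot[q^{-1}(\phi(E_S))]\leq [C_x]\cdot q^*\phi_*[E_S]=360$; equality forces $[\phi(E_S)]=\phi_*[E_S]=60H$ and, as a byproduct, that $\phi$ has degree one on each non-contracted component $E_{S_{i,j,k}}$ (a fact reused in Proposition \ref{prop:thmB}). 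You instead replace the citation of the pushforward formula by the statement that the invariant part of $N^1(F)_{\bQ}$ under coordinate permutations and diagonal scalings is $\bQ H$. That input is correct for the Fermat (every character occurring in $H^4_{\mathrm{prim}}(X)$ under the diagonal scalings is nontrivial, and $H^2(F)_{\mathrm{prim}}\cong H^4(X)_{\mathrm{prim}}$ equivariantly), and it is a legitimate way around the fact that $F$ has large Picard rank; but it is a nontrivial external fact of comparable weight to the one the paper cites, so the two proofs are about equally self-contained.

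Two points in your write-up deserve tightening. The transversality you flag as "a secondary point worth care" is the only place where your argument, as written, gives merely $[\phi(E_S)]\cdot[C_x]\geq 360$ and hence $c\geq 60$; without an upper bound this does not pin down $c$. It can be closed without the paper's sandwich: in characteristic zero the map $p\colon q^{-1}(\phi(E_{S_{i,j,k}}))\to X$ is generically finite from a reduced source, so over a general $x$ its fibre is reduced and its degree equals both the number of distinct preimages and, by the projection formula, the intersection number $[C_x]\cdot[q^{-1}(\phi(E_{S_{i,j,k}}))]$ --- here one must use that Proposition \ref{prop:360} asserts the $36$ lines per component \emph{exhaust} the fibre, not just that there are at least $36$. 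Your computation $H\cdot[C_x]=6$ via the $(2,3)$ complete-intersection description is exactly the content of \cite[Lemma 2.6]{cubicfourfolds1} and is fine.
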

\begin{proof}
    Proposition \ref{prop:360} implies that for $x\in X$ a general point and $C_x=p^{-1}(x)$ the curve of lines through
    $x$, we have $[C_x].[q^{-1}(\phi(E_S))]=360$. Since $\phi_*[E_S]=60H\in\Pic F$ from \cite[Remark
    6.4.19]{huybrechts} and $[C_x].q^*[60H]=360$ too from \cite[Lemma 2.6]{cubicfourfolds1}. Note this also gives that the degree
    of the morphism $\phi|_{E_S}:E_S\to\phi(E_S)$ is one on the components $E_{S_{i,j,k}}$ which do not get contracted.
\end{proof}

To end this section, we address a question raised in \cite[Remark 6.4.19]{huybrechts}.

\begin{proposition}\label{prop:thmB}
    Let $X\subset\PP^5$ be a general cubic fourfold. The Voisin map $\phi:\tilde{\cF}_X\to F$ is generically one-to-one
    when restricted to $E_S\subset\tilde{\cF}_X$.  
\end{proposition}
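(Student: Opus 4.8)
The plan is to deduce the statement for a general cubic fourfold from the explicit Fermat computation by a specialisation argument in the universal family of Section \ref{sec:prelim}. Recall from there the relative Voisin morphism $\Phi:\cE_\cS\to\cF$ over the base $B\subseteq|\OO_{\PP^5}(3)|$ of smooth cubics, whose restriction over $[X]\in B$ is the map $\phi|_{E_S}:E_S\to F$. For general $X$ the second type locus $S$ is smooth and irreducible of dimension two, so $E_S$ is an irreducible threefold ($\PP^1$-bundle over $S$); and since $\phi_*[E_S]=60H\neq0$ in $\Pic F$ (see \cite[Remark 6.4.19]{huybrechts}, together with Corollary \ref{cor:phiESFermat} for the Fermat), the image $D:=\phi(E_S)$ is an honest divisor and $\phi|_{E_S}:E_S\to D$ is dominant and generically finite, of some degree $d\geq1$. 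Writing $\mathcal{D}:=\Phi(\cE_\cS)\subseteq\cF$ for the total image, $d$ is exactly the length of the fibre of $\Phi$ over a general point of $\mathcal{D}$, such a point lying over a general $X\in B$. The goal is to show $d=1$.

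First I would pin down the fibre of $\Phi$ over a general point of the Fermat image $\phi(E_S)$. By Lemma \ref{lem:imSijk} the ten components $E_{S_{i,j,k}}$ map generically one-to-one onto pairwise distinct divisors $\phi(E_{S_{i,j,k}})$, while by Lemmas \ref{lem:imagephiSij} and \ref{lem:imESij} the remaining components $E_{S_{i,j}^{(\mu)}}$ are contracted onto the surfaces $S_{i,j}^{(\mu)}\subset\phi(E_{S_{i,j,k}})$. As the contracted images are only two-dimensional, a general point $z$ of the three-dimensional $\phi(E_S)$ lies on exactly one component $\phi(E_{S_{i,j,k}})$ and avoids all the $S_{i,j}^{(\mu)}$; its preimage under $\Phi$ therefore consists of a single reduced point lying on $E_{S_{i,j,k}}$ only. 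In particular $z$ lies in the open locus of $\mathcal{D}$ over which $\Phi$ has finite fibres, and the scheme-theoretic fibre $\Phi^{-1}(z)$ has length one.

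Finally I would invoke upper-semicontinuity of fibre length: restricting the proper morphism $\Phi$ to the open locus where it is finite, the function $z\mapsto\operatorname{length}\Phi^{-1}(z)$ is upper-semicontinuous and attains its minimal value $d$ on the dense locus of $\mathcal{D}$ lying over general $X\in B$. Since the point $z$ above lies over the special Fermat cubic, this yields $d\leq\operatorname{length}\Phi^{-1}(z)=1$, whence $d=1$ and $\phi|_{E_S}$ is generically one-to-one for general $X$. The main obstacle is the middle step: one must verify that a general point of the Fermat image is genuinely hit only once in all of $E_S$, ruling out hidden preimages on the other non-contracted components (handled by the distinctness of images in Lemma \ref{lem:imSijk}) and on the contracted ones (handled by the dimension count), and that this single fibre is reduced, so that it is the length, rather than the naive set-theoretic point count (which is not semicontinuous), that forces $d=1$.
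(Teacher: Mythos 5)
Your strategy is sound and rests on the same inputs as the paper's argument --- the explicit Fermat analysis (Lemmas \ref{lem:imagephiSij}, \ref{lem:imESij}, \ref{lem:imSijk} and Proposition \ref{prop:360}) fed into the relative morphism $\Phi:\cE_\cS\to\cF$ over $B$ --- but your specialisation mechanism is genuinely different. The paper argues numerically: by conservation of numbers \cite[Proposition 10.2]{fulton} the intersection number $[C_{x,t}].q^*[\mathcal{D}_t]$ is independent of $t$ and equals $360$ at the Fermat by Corollary \ref{cor:phiESFermat}; since $\phi_*[E_{S_t}]=60H$ also pairs to $360$, the class of the reduced image coincides with the pushforward class and the generic degree is forced to be one. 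You instead bound the generic fibre length of $\Phi$ by its value at a single well-chosen point $z$ over the Fermat, via upper semicontinuity of fibre length on the locus where $\Phi$ is finite. Both routes work. Yours buys independence from the precise value $60H$ (you only need that the image is a divisor and that $E_{S_t}$, $\mathcal{D}_t$ are irreducible for general $t$, i.e.\ Proposition \ref{prop:Wdiv}.\eqref{prop:Wdiv4}); the paper's cycle-theoretic comparison buys freedom from any discussion of scheme-theoretic fibres, at the cost of invoking Huybrechts' class computation for general $t$.

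The one step you name but do not carry out --- that $\Phi^{-1}(z)$ has \emph{length} one rather than merely being a single point set-theoretically --- is genuinely the crux of your approach, and as written it is a gap; but it is fillable. Since $z$ lies over the (reduced) Fermat point of $B$, the scheme-theoretic fibre $\Phi^{-1}(z)$ coincides with the fibre over $z$ of $\phi$ restricted to the Fermat fibre $E_S=(\cE_\cS)_{[X_{\mathrm{Fermat}}]}$. The relevant component $E_{S_{i,j,k}}$ is generically a $\PP^1$-bundle over the smooth surface $S_{i,j,k}\cong C_{i,j,k}\times C'_{i,j,k}$, hence reduced (indeed smooth) at the unique preimage $w$ of a general $z$; and in characteristic zero the conclusion of Lemma \ref{lem:imSijk} upgrades from ``generically one-to-one'' to ``birational onto its image'', hence an isomorphism over a dense open of $\phi(E_{S_{i,j,k}})$, so $\Phi^{-1}(z)=\operatorname{Spec}\bC$ for general $z$. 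With that supplied, together with the routine observations that $\cE_\cS$ and $\mathcal{D}$ are irreducible (so the open locus where the length is at most one is dense in $\mathcal{D}$ and meets $\mathcal{D}_t$ in a dense open subset for general $t$), your argument is complete.
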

In plain words, a general line which is residual to a second type line is residual to exactly one second type line.
\begin{proof}
    Let $\cE_\cS$ be the inverse image in $\tilde{\cF}$ (recall its definition in \eqref{eq:tF}) of the universal second
    type locus $\cS$. Denote by $\mathcal{D}=\phi(\cE_\cS)\subset\cF$ its scheme-theoretic image under the Voisin map,
    which comes with a morphism to $B=|\OO_{\PP^5}(3)|_{\mathrm{sm}}$. There is an open $U\subseteq B$, so that for
    $t\in U$, $\mathcal{D}_t$ is a divisor in $\cF_t$, and is reduced as $\mathcal{D}$ is so, and irreducible from
    Proposition \ref{prop:Wdiv}.\eqref{prop:Wdiv4}. Over points in $U$, $\mathcal{D}_t$ agrees with $\phi(E_{S_t})$.
    From the principle of conservation of numbers \cite[Proposition 10.2]{fulton}, since from Corollary
    \ref{cor:phiESFermat} we know that $[C_x].q^*[\phi(E_S)]=360$ on the general point $x$ of the Fermat, the same must
    be true of $[C_{x,t}].q^*[\mathcal{D}_t]$ for every point $t\in B$. As for $t$ general the class of $\phi(E_{S_t})$
    in $\Pic F_t$ is $60H$ (see Corollary \ref{cor:phiESFermat}) and $[C_x].q^*(60H)=360$, we obtain that for $t\in U$,
    $[\phi(E_{S_t})]=\phi_*[E_{S_t}]$ which implies that the degree of $\phi$ restricted to $E_{S_t}$ is one as
    required.
\end{proof}

\begin{remark}
    \begin{enumerate}
        \item As $K_F=0$ we obtain that for a general cubic fourfold, $K_{\tF}=E_S$. In particular the morphism
        $\phi:\tF\to F$ is ramified along $E_S$. The above shows that above a general branch point of $F$, there is
        precisely one ramification point. It would be interesting to know whether the ramification at this point is
        simple - in particular in view of computing the monodromy group of $\phi$ (cf.\ Section \ref{sec:monodromy3}).
        \item Presumably the $[C_x].[q^{-1}(\phi(E_S))]=360$ points of $C_x$ for $x\in X$ general are connected to the
        intersection points of the 120 tritangent 2-planes a space $(2,3)$-curve has.
    \end{enumerate}
\end{remark}

\section{Some Properties of the Second Type Locus}\label{sec:univsecondtype}

In previous work of ours (\cite[Remark 3.8]{cubicfourfolds1}) we sketched how to obtain the following
result, using the correspondence between hyperplane sections with two nodes and $(2,3)$-complete intersections (cf.\
\cite[Lemma 6.5]{cg}). At the time we had given a proof in the case of cubic fourfolds, but as this is an important
property of the second type locus and we will be using it for threefolds too, we include its proof for any $n\geq3$ here.

\begin{theorem}\label{thm:t2bir} 
    Let $n\geq3$ and let $X\subset\PP^{n+1}$ be a general cubic $n$-fold, $F$ its Fano scheme of lines and $F_2\subset
    F\subset \G(2,n+2)$ the locus of lines of second type. Consider the induced morphism from the family of lines 
    \[\xymatrix{
        \bL_2\ar[r]^-p\ar[d]^q & W\subset X. \\
        F_2 &
    }\]
    Then $p$ is birational onto its image $W$.
\end{theorem}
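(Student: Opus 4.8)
The plan is to reduce the statement to a degree-one count and then to control the number of second type lines through a general point of $W$ via the node structure of tangent hyperplane sections. By Proposition \ref{prop:Wdiv}, for general $X$ the locus $F_2$ is smooth and irreducible of pure dimension $n-2$, so $\bL_2$, being a $\PP^1$-bundle over $F_2$, is irreducible of dimension $n-1$; moreover $W$ is irreducible of the same dimension $n-1$ and $p$ is generically finite. Hence it suffices to prove that $\deg p=1$, i.e.\ that a general point $w\in W$ lies on a unique line of second type.

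First I would set up the relevant dictionary. For $w\in X$ the tangent hyperplane section $Y_w:=T_wX\cap X\subset T_wX\cong\PP^{n}$ is a cubic $(n-1)$-fold, singular at $w$. If $w'\neq w$ is another singular point of $Y_w$, then the line $\overline{ww'}$ meets the cubic $Y_w$ with intersection multiplicity at least $2+2=4>3$ (two at each node), hence $\overline{ww'}\subset Y_w\subset X$; and since $Y_w$ is singular at $w'$ while $X$ is smooth there, necessarily $T_{w'}X=T_wX$, so the Gauss map $\Phi$ identifies $w$ and $w'$ and $\overline{ww'}$ is of second type. Conversely, for a general $w$ on a second type line $\ell$ the antipodal point $w^a\in\ell$ satisfies $T_{w^a}X=T_wX$ by \cite[Definition 6.6]{cg}, so $w^a$ is a further singular point of $Y_w$. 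This gives a bijection between the second type lines through $w$ and the singular points of $Y_w$ other than $w$; equivalently, under the $(2,3)$-correspondence of \cite[Lemma 6.5]{cg} the lines through $w$ are the points of the $(2,3)$-complete intersection $\Gamma_w\subset\PP^{n-1}$ and the second type ones are exactly its singular points. Thus $\deg p$ equals the number of singular points of $Y_w$ minus one, for $w\in W$ general.

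It therefore remains to show that for general $w\in W$ the section $Y_w$ has exactly two nodes. I would argue this in the dual space $(\PP^{n+1})^\vee$. The Gauss map $\Phi$ is finite and carries $W$ onto the locus $X^\vee_2$ of hyperplanes tangent to $X$ at (at least) two points, which has dimension $n-1=\dim W$; the locus $X^\vee_{\geq3}$ of hyperplanes with three or more contact points is one dimension smaller. Hence a general point of $X^\vee_2$, and so a general tangent hyperplane $T_wX$ with $w\in W$ general, is tangent at exactly two points, i.e.\ $Y_w$ has exactly two nodes. This yields $\deg p=2-1=1$ and hence the birationality of $p$ onto $W$.

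The main obstacle is precisely this last codimension estimate: that the trinodal locus $X^\vee_{\geq3}$ does not fill up $X^\vee_2$, equivalently that through a general point of $W$ there passes no third second type line. This fails for special cubics — the Fermat, for instance, has Eckardt points through which pass two-dimensional families of second type lines (the points $p^{(\mu)}_{i,j}$), which is exactly why its $F_2$ is reducible — so the genericity of $X$ must enter here. I would establish the estimate by a dimension count in the universal family $\mathcal{F}_2\to|\OO_{\PP^{n+1}}(3)|_{\mathrm{sm}}$, showing that the expected codimension is attained for the general cubic; alternatively, one may specialise to a cubic with an explicit second type locus (the Fermat for $n=3,4$ via Propositions \ref{prop:fermat3} and \ref{prop:fermatdegW}, or the degeneration of \cite[Remark 3.8]{cubicfourfolds1} in general), verify directly that a general point of each component of $W$ lies on a single ruling line, and deduce that the generic degree of $p$ is one.
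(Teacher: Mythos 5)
Your reduction is the right one and your dictionary is exactly the paper's: second type lines through $w$ correspond to singular points of $Y_w=T_wX\cap X$ other than $w$, equivalently to singular points of the associated $(2,3)$-complete intersection in $\PP^{n-1}$, so the theorem amounts to showing that for general $w\in W$ the tangent section has exactly two singular points. But that is where your argument stops rather than finishes. Your dual-variety reformulation (``$X^\vee_{\geq 3}$ has dimension one less than $X^\vee_2$'') is literally a restatement of the theorem, and the proposed ``dimension count in the universal family showing the expected codimension is attained'' is precisely the content that must be supplied: expected codimension is not attained automatically, and verifying it requires exhibiting at least one actual point where the degenerate behaviour does not occur. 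The paper does this by an explicit construction: starting from a $(2,3)$-complete intersection $V(f_2)\cap V(f_3)\subset\PP^{n-1}$ with a single $A_1$ singularity, it forms the smooth cubic $X_0=V(f_2x_n+f_3+x_n^2x_{n+1})\subset\PP^{n+1}$ and the point $x_0=[0:\cdots:0:1:0]$, for which the fibre of $\bL\to X_0$ is exactly that $(2,3)$-intersection; since $\bL_2$ is the singular locus of $p$ (Proposition \ref{prop:Wdiv}), $x_0$ is a point of $W_0$ with a unique second type line through it. Your fallback of specialising to the Fermat is indeed viable for $n=3,4$ (and is noted as an alternative proof in the paper), but for general $n$ you are deferring to exactly the construction you would need to carry out.

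There is a second, subtler gap in the passage from the special cubic back to the general one. Knowing that \emph{one} point of $W_0$ on a \emph{special} cubic has a unique second type line through it does not formally bound the generic degree of $p$ on a general cubic: the number of distinct preimages of a generically finite map can drop under specialisation, so ``degree one on a special fibre'' does not by itself ``deduce that the generic degree of $p$ is one.'' What makes the argument work in the paper is that the condition ``$p^{-1}(x)$ has a unique $A_1$ singularity and no other singular point'' is \emph{open} on the universal family, combined with the irreducibility of the universal second type locus $\mathcal{F}_2$ (hence of $\cW$): an open condition holding at one point of an irreducible scheme holds at its generic point, hence over the general cubic at the general point of $W$. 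You should make both steps explicit --- the construction of a witness point and the openness-plus-irreducibility transfer --- for the proof to be complete.
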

In other words on a general cubic, a general point through which passes a second type line has exactly one second
type line through it.
\begin{proof}
    First one notes that there is a bijection between pairs $(Y,x)$ where $Y\subset\PP^n$ is a cubic $(n-1)$-fold with
    two $A_1$ singularities, one of which is $x$, and $(2,3)$-complete intersections in $\PP^{n-1}$ with one $A_1$
    singularity and no other singular points (see \cite[Remark 3.8]{cubicfourfolds1}).

    Starting from such a $(2,3)$-complete intersection in $\PP^{n-1}$ \[C_0\cap Q_0=V(f_2)\cap V(f_3)\] and taking its
    corresponding $(Y_0,p_0)$ in $\PP^n$, with equation \[f=f_2\cdot x_n+f_3, \hspace{5pt} p_0=[0:\cdots:0:1],\] one
    constructs a smooth cubic $n$-fold $X_0\subset\PP^{n+1}$ (cf.\ the proof of \cite[Proposition 3.5]{cubicfourfolds1})
    with equation \[g=f + x_n^2x_{n+1}\] containing the point $x_0=[0:\cdots:0:1:0]$ whose tangent
    hyperplane meets $X_0$ at the pair $(Y_0, p_0)$. The span of lines through $x_0$ is a cone over $C_0\cap Q_0$. As
    for any smooth cubic $X$ the singular locus of $p:\bL\to X$ is precisely $\bL_2$ (see Proposition \ref{prop:Wdiv}),
    the above constructs one point in $X_0$ with a second type line through it corresponding to the unique singularity
    of $p^{-1}(x_0)=C_0\cap Q_0$. As having a unique $A_1$ singularity is an open condition, this must hold generically for
    (the necessarily singular) fibres over points of $p(\bL_2)=W_0\subset X_0$. This implies that for $X_0$, the map
    $\bL_2\to W_0$ is birational. 

    To conclude for the general cubic $n$-fold we argue as follows. Consider the universal smooth family
    $\cX\to|\OO_{\PP^{n+1}}(3)|_{\mathrm{sm}}$ and the universal locus $\cW:=p(q^{-1}(\mathcal{F}_2))$ over
    $|\OO_{\PP^{n+1}}(3)|_{\mathrm{sm}}$, where $\mathcal{F}_2$ is the universal second type locus, known to be
    irreducible from \cite[Proof of Proposition 2.2.13]{huybrechts}. From Proposition \ref{prop:Wdiv}, as the locus of
    second type lines $F_{2,t}$ of any smooth cubic $\cX_t$ is pure $(n-2)$-dimensional, we obtain that the
    $\PP^1$-bundle $q^{-1}(F_{2,t})$ is pure $(n-1)$-dimensional, and the same proposition gives that $\cW_t$ is a
    divisor in $\cX_t$ for any $t\in |\OO_{\PP^{n+1}}(3)|_{\mathrm{sm}}$. Hence the morphism $q^{-1}(F_{2,t})\to \cW_t$
    is generically finite. If $t$ is general then $F_{2,t}, \cW_t$ are irreducible and reduced. As over some points,
    namely the generic point of $W_0$ in the special cubic $X_0$ above we know that the fibre in $\bL$ has a single
    $A_1$ singularity, we obtain the same for the generic cubic. In particular, the morphism
    $p:q^{-1}(F_{2,t})\to\cW_t$ is birational for a general $t$ as required. 
    \end{proof}

\begin{remark}\label{rem:err}
    In \cite[Fact 3.2.(2)]{irrgk} we erroneously stated that in the case of a cubic threefold $X\subset\PP^4$,
    \cite[10.18]{cg} claims that $\bL\to X$ has one single ramification point of ramification index two over a generic
    point of the branch locus. What they do claim (and prove) is that for a generic point $x\in X$ through which there
    passes a line of second type, this line will count with multiplicity two as one of the six lines through $x$. In
    particular there could a priori still be multiple ramification points, i.e., other second types lines through $x$.
    The above proposition indeed confirms the stronger claim. Either way, this does not affect any results in
    \cite{irrgk} as in the proof we only required what the weaker statement about multiplicities pertains to. 
\end{remark}

One can derive another proof of Theorem \ref{thm:t2bir} for a general cubic threefold or fourfold by degenerating
to the Fermat (instead of $X_0$ used in the above proof) and using Propositions \ref{prop:fermat3},
\ref{prop:fermatdegW} respectively. In these cases the class of $W$ in the Picard group of the cubic is $\OO(30)$ and
$\OO(75)$ respectively.

We now extend the proof of Theorem \ref{thm:t2bir} to obtain the following refinement which will be used in the next
section.

\begin{lemma}\label{lem:ram3fold}
    Let $X\subset\PP^4$ be a general cubic $3$-fold and $\ell\in F_2$ a general second type line. Then there is a
    point on $\ell$ through which there pass precisely one further second type line and two more of first type.
\end{lemma}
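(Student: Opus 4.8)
The plan is to read the statement off the degree-six covering $p\colon\bL\to X$ restricted to the line $\ell$. For $x\in\ell$ the fibre $p^{-1}(x)$ is the length-six scheme of lines of $X$ through $x$, and since $\ell$ is of second type it occurs in $p^{-1}(x)$ with multiplicity two for general $x\in\ell$; discarding this doubled section leaves the residual degree-four cover $\pi_\ell\colon C_\ell\to\ell$, $[\ell']\mapsto\ell\cap\ell'$, where $C_\ell\subset F$ is the curve of lines meeting $\ell$, identified with a component of $p^{-1}(\ell)$. The dictionary I will use is Proposition \ref{prop:Wdiv}(3): $\bL_2$ is exactly the non-smooth locus of $p$, so for $\ell'\neq\ell$ through $x_0\in\ell$ the following are equivalent — $\ell'$ is of second type; $(\ell',x_0)\in\bL_2$; $\ell'$ occurs in $p^{-1}(x_0)$ with multiplicity $\geq2$; the cover $\pi_\ell$ ramifies at $[\ell']$ over $x_0$. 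Thus what must be produced is a point $x_0\in\ell$ at which $\pi_\ell$ has a \emph{simple} branch point, of ramification profile $(2,1,1)$: the doubled sheet is then a second second-type line $\ell''\ni x_0$, the two remaining sheets are distinct lines of first type, and $p^{-1}(x_0)=2[\ell]+2[\ell'']+[\ell_3]+[\ell_4]$ with multiplicities $2,2,1,1$.

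First I would settle non-emptiness. As $C_\ell$ is connected (indeed irreducible for $\ell$ general in $F_2$) and $\ell\cong\PP^1$ is simply connected, the degree-four cover $\pi_\ell$ cannot be \'etale, hence has at least one branch point $x_0$; by the dictionary this yields a second-type line $\ell''\neq\ell$ meeting $\ell$ at $x_0$. Writing $\Sigma\subset W$ for the locus of points lying on at least two second-type lines, this says $\ell\cap\Sigma\neq\emptyset$ for every $\ell$. Conversely each point of $\Sigma$ lies on a second-type line, so as $\ell$ ranges over $F_2$ the finite sets $\ell\cap\Sigma$ sweep out all of $\Sigma$; hence $\dim\Sigma=1$ and, for general $\ell$, the points of $\ell\cap\Sigma$ are general points of $\Sigma$.

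It remains to see that at such a point the profile is exactly $(2,1,1)$, equivalently that a general point of $\Sigma$ lies on \emph{precisely} two second-type lines; granting this, $2[\ell]+2[\ell'']$ exhausts the second-type part and the length-two residual can contain no line of multiplicity $\geq2$ (which would be a third second-type line), so it is $[\ell_3]+[\ell_4]$ with $\ell_3\neq\ell_4$ of first type. To produce one instance of this profile I would extend the construction in the proof of Theorem \ref{thm:t2bir}: there the fibre of $p$ over the distinguished point is a cone over a $(2,3)$-complete intersection in $\PP^2$ — six points cut out by a conic and a cubic — and a second-type line corresponds to a node, i.e.\ a point of tangency of the conic and the cubic. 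Choosing instead a conic and a cubic tangent at exactly two of their six intersection points and transverse at the other two, and running the same correspondence, produces a smooth cubic threefold $X_1$ with a point $x_1$ whose six lines have profile $(2,2,1,1)$: two second-type lines, one of which I take to be $\ell$, and two first-type lines.

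Finally I would spread this out using the universal second-type locus $\cF_2$ over $|\OO_{\PP^4}(3)|_{\mathrm{sm}}$, which is irreducible by the proof of Theorem \ref{thm:t2bir}. The condition that $\pi_\ell$ possess a simple branch point is open, since a simple branch point is stable under deformation, and it is non-empty by the model $(X_1,\ell)$; irreducibility of $\cF_2$ then forces it to hold at the general point, i.e.\ for a general second-type line on a general cubic threefold. The main obstacle is exactly this genericity step: one must exclude that for the general $\ell$ \emph{every} point of $\ell\cap\Sigma$ carries a third second-type line (profile $(2,2,2)$ or worse, equivalently $\pi_\ell$ of profile $(2,2)$, $(3,1)$ or $(4)$). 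Conceptually this cannot occur generically, since a third second-type line through $x=\ell\cap\ell''$ would have to pass through a prescribed point and lines through a fixed point of $\PP^4$ form a codimension-three family in $\G(2,5)$ while $F_2$ is only a curve; rigorously it is enforced by the openness above together with the single explicit model $(X_1,\ell)$, so that the profile $(2,1,1)$ survives on the general fibre.
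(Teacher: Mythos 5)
Your proposal is correct and follows essentially the same route as the paper: you produce an explicit model with line profile $(2,2,1,1)$ at a point by taking a conic and cubic in $\PP^2$ tangent at exactly two of their intersection points (which is precisely the paper's extension of the Theorem~\ref{thm:t2bir} correspondence to $(2,3)$-complete intersections with two ordinary double points), and then spread the simple-ramification condition over the irreducible universal second-type locus $\cF_2$ by openness/upper semicontinuity of the ramification index, exactly as the paper does. The digression on the locus $\Sigma$ and the $\pi_1(\PP^1)$ non-emptiness argument are harmless extras not needed for the main line of reasoning.
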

\begin{proof}
   The same correspondence as in the proof of Theorem \ref{thm:t2bir} can be extended to
    \begin{enumerate}
        \item $(Y,x)$ where $Y\subset\PP^n$ is a cubic $(n-1)$-fold with three non-collinear ordinary double points, one
        of which is $x$, 
        \item $(2,3)$-complete intersections in $\PP^{n-1}$ with two ordinary double points.
    \end{enumerate}
    We recall the construction (see also \cite[Lemma 1.2]{amerikvoisin} for the fourfold case). The $(2,3)$-complete
    intersection $C\cap Q\subset\PP^{n-1}$ can be blown up to obtain a variety $\widetilde{Y}\to\PP^{n-1}$ which has two
    singularities above the singular points of $C\cap Q$. One now contracts the strict transform $\widetilde{Q}$ of $Q$
    to obtain $Y$, which is also singular at the image of $\widetilde{Q}$, the marked point. 

    The same method as we sketched in the proof of Theorem \ref{thm:t2bir} gives that starting with a
    $(2,3)$-intersection as above, we obtain a smooth cubic $X\subset\PP^{n+1}$ with a point $x_0$ whose tangent
    hyperplane meets $X$ at a $(Y,x_0)$ as in the correspondence. In particular, the two other singular points of $Y$ will
    correspond to two distinct second type lines through $x_0$, while every other line through $x_0$ will be of first type.

    Applying the above in the $n=3$ case we obtain a special smooth cubic, call it $X_0\subset\PP^4$ which has a second
    type line $\ell_0$ and a point $p_0\in\ell_0$ so that there are precisely four lines in $X_0$ passing through $p_0$,
    two of second type, and two of first. 

    Let $X\subset\PP^4$ be a smooth cubic fourfold. For a line $\ell\in F_2$, consider $C_\ell\subset F$ the locus of
    lines meeting $\ell$. We have a degree four morphism $C_\ell\to\ell$ taking a line $\ell'$ meeting $\ell$ to the point
    $\ell\cap\ell'$. Consider the universal family 
    \[
    \xymatrix{
        \mathcal{C}\ar[d]\ar[r]^-\pi & \bL_2=q^{-1}(\cF_2)\ar[dl]^{\mathrm{pr}_2} \\
        \cF_2
    }
    \]
    where for $\ell\in\cF_2$, the fibre in $\mathcal{C}$ over $\ell$ is $C_\ell$, and $\pi_\ell: \mathcal{C} \to
    \PP^1_\ell$ is the one described above. For any $\ell$, $\pi_\ell$ is ramified at the points of intersection of
    $C_\ell$ with $F_2$ in $F$, in particular at a non-empty locus. Furthermore, there exist $p_0\in
    \ell_0,\ell_0'\subset X_0$ both second type lines constructed in our special $X_0$ above where the ramification of
    $\pi_{\ell_0}:C_{\ell_0}\to \ell_0$ at the point $[\ell_0']\in C_\ell$ is simple. This ramification type must hold
    generically over $\cF_2$, for example since the ramification index $e_{\ell_t} = \operatorname{length}
    \big((\Omega_\pi)_{[\ell_t]}\big)+1$ is upper semicontinuous (see \cite[Exercise 7.1.6]{liu} and the upper
    semicontinuity of the rank) over the branch locus of $\pi$, which is a divisor from purity and
    smoothness of $\cF_2$.
\end{proof}

\section{Monodromy for Cubic 3-Folds}\label{sec:monodromy3}

In this section, let $X\subset\PP^4$ be a smooth cubic hypersurface, and denote by $F, F_2, \bL, \bL_2$ as in Section
\ref{sec:prelim}. We refer to \cite{cg} (cf. \cite[Remark 5.1.6]{huybrechts} for the following properties. In the
threefold case the morphism $p:\bL\to X$ is generically finite of degree six, i.e., there are six lines through a general
point of $X$. There are more than 6 lines through a point if and only if the point is an Eckardt point, in which case
there are infinitely many lines through that point, all of second type. A cubic threefold can contain at most 30 Eckardt
points (in fact this is the case for the Fermat, see Proposition \ref{prop:fermat3}), but the generic one contains none,
so in this case $\bL\to X$ is finite. For a line $\ell\in F$, we denote by $C_\ell\subset F$ the locus of lines meeting
$\ell$ - more precisely, $C_\ell$ is the image in $F$ under $q$ of the closure of $p^{-1}(\ell)\setminus
q^{-1}([\ell])$.

Given a generically finite dominant morphism $f:X\to Y$ of degree $d$ between irreducible varieties (necessarily of the same
dimension), we obtain a degree $d$ extension of function fields $k(X)/k(Y)$, and taking the Galois closure $K/k(Y)$ of
this extension, we denote by $\mathrm{Gal}_f=\mathrm{Gal}(K/k(Y))$. This agrees with the usual monodromy group (see
\cite{harrisgalois, sottileyahl}) $\mathrm{Mon}_f$, which is defined as the image in $S_d$ of the group of deck
transformations of the unramified (i.e., topological) cover $X\setminus\mathrm{ram}(f)\to U$, where $U =
Y\setminus\mathrm{branch}(f)$ is the largest dense open in $Y$ over which $f$ is \'etale.

Recall the following classical results for $f:X\to Y$ and $U$ as above.

\begin{lemma}(\cite[p698]{harrisgalois})\label{lem:harris}
    If there exists a fibre of $f$ with a unique point of ramification index two and all other points unramified, then
    $\mathrm{Mon}_f\subseteq S_d$ contains a transposition.
\end{lemma}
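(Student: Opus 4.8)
The plan is to exhibit the transposition as a \emph{local} monodromy element around the branch locus, by analysing the analytic structure of $f$ near the distinguished fibre. Fix a point $y_0\in Y$ whose fibre $f^{-1}(y_0)$ has a unique ramification point $x_0$, of local degree two, all remaining $d-2$ points being unramified, so that $f^{-1}(y_0)$ consists of $d-1$ reduced points. Passing to the complex-analytic topology and using that $f$ is finite (hence proper with finite fibres) near this fibre, I would choose a contractible ball $\Delta\subset Y$ around $y_0$ small enough that the preimage splits as a disjoint union
\[ f^{-1}(\Delta) = N \sqcup \Delta_1\sqcup\cdots\sqcup\Delta_{d-2}, \]
where $N$ is a neighbourhood of $x_0$ and each $\Delta_i$ is a neighbourhood of one of the unramified points. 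Over each $\Delta_i$ the map is a local analytic isomorphism onto $\Delta$, so these $d-2$ sheets are canonically trivialised over all of $\Delta$ and contribute nothing to the monodromy.

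The heart of the matter is the degree-two piece $N\to\Delta$. Writing $\Sigma=\mathrm{branch}(f)\cap\Delta$, I claim the induced unramified double cover $N\setminus f^{-1}(\Sigma)\to\Delta\setminus\Sigma$ is connected: were it disconnected it would be a trivial double cover, that is two disjoint copies of $\Delta\setminus\Sigma$, and these would extend to two distinct points of $N$ over $y_0$, contradicting the fact that $x_0$ is the unique point of the fibre there (equivalently, this would force $f$ to be unramified at $x_0$). A connected double cover has monodromy the unique nontrivial homomorphism $\pi_1(\Delta\setminus\Sigma)\twoheadrightarrow \bZ/2$, whose nontrivial element interchanges the two sheets of $N$. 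I would therefore choose a loop $\gamma\in\pi_1(\Delta\setminus\Sigma)$ realising this swap.

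To conclude, I transport $\gamma$ into the global monodromy. Choosing a path from the base point of $U$ into $\Delta\setminus\Sigma\subset U$ realises $\gamma$ (up to conjugacy) as an element of $\pi_1(U)$, and since $f$ over $\Delta$ is exactly the disjoint union described above, the induced permutation of the $d$ sheets of $f$ swaps the two sheets meeting at $x_0$ and fixes each of the $d-2$ sheets trivialised over the $\Delta_i$. This permutation is precisely a transposition, so $\mathrm{Mon}_f\subseteq S_d$ contains a transposition, as required.

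The only genuine subtlety I anticipate is the local decomposition together with the connectedness of $N$: one must ensure that the hypothesis ``ramification index two'' for the distinguished fibre produces a connected degree-two germ at $x_0$ — equivalently a transverse $z\mapsto z^2$ model — rather than two unramified branches. This is guaranteed by finiteness of $f$ and by $x_0$ being the \emph{sole} point of its fibre of local degree two; everything else is standard covering-space theory, namely that a connected double cover has monodromy $\bZ/2$ acting by the transposition, and that local loops map compatibly into $\pi_1(U)$ under the monodromy representation.
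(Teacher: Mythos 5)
The paper offers no argument for this lemma --- it is quoted directly from Harris --- so the only thing to compare against is the classical proof, which is precisely the local-monodromy computation you set up: split $f^{-1}(\Delta)$ over a small ball $\Delta$ around the distinguished branch point into a degree-two piece $N$ and $d-2$ trivial sheets, show $N$ restricts to a \emph{connected} double cover of $\Delta\setminus\Sigma$, and transport the resulting sheet-swap into $\pi_1(U)$. Your outline of this is the right one, and every step is fine except the single point you yourself identify as the crux, where the justification you give does not actually work.

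The assertion that connectedness of $N\setminus f^{-1}(\Sigma)\to\Delta\setminus\Sigma$ is ``guaranteed by finiteness of $f$ and by $x_0$ being the sole point of its fibre of local degree two'' is false as stated. If $X$ is not unibranch at $x_0$ --- picture two smooth sheets, each mapping isomorphically onto $\Delta$, crossing transversally at $x_0$, as at the node of a nodal curve that is the unique preimage in its local piece --- then $f|_N$ is finite of local degree two and $x_0$ is the sole point of $N$ over $y_0$, yet $N\setminus f^{-1}(\Sigma)$ is a disjoint union of two copies of $\Delta\setminus\Sigma$. Your extension argument breaks exactly here: the two trivial sheets do \emph{not} limit to two distinct points of $N$ over $y_0$; they limit to two points of the normalisation that are glued together in $N$. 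In that configuration the local monodromy is trivial and no transposition is produced, so the hypothesis as you have interpreted it (local degree two, unique preimage, finiteness) is genuinely insufficient. The repair is either to assume $X$ normal (or smooth, or merely unibranch) at $x_0$ --- then disconnectedness really would force two distinct preimages of $y_0$ and your contradiction goes through --- or to read ``ramification index two'' in the stronger sense of $\operatorname{length}(\Omega_f)_{x_0}=1$, i.e.\ the transverse $z\mapsto z^2$ model, which rules out the nodal picture outright (there the length is $2$). Either fix is harmless in the paper's applications, where the sources ($\bL$, and $C_\ell$ for general $\ell$) are smooth at the relevant points, but as written this step is a real gap rather than a formality; the rest of the argument (triviality of the $d-2$ unramified sheets, $\bZ/2$ monodromy of a connected double cover, compatibility of local loops with the global monodromy representation) is standard and correct.
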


Let $X^{(s)}_U$ be the complement of the big diagonal in the fibre product of $X$ $s$-times with itself over $U$. In
other words, over a point of $U$, $X^{(s)}_U$ consists of $s$ distinct points in the fibre.

\begin{lemma}(\cite[Proposition 2]{sottileyahl})\label{lem:montrans}
    $X^{(s)}_U$ is irreducible if and only if $\mathrm{Mon}_f$ is an $s$-transitive subgroup of $S_d$.
\end{lemma}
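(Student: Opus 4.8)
The plan is to realise $X^{(s)}_U\to U$ as a finite \'etale cover of $U$ and then invoke the standard dictionary between such covers and finite $\pi_1$-sets, under which connected components correspond to monodromy orbits. Fix a base point $y_0\in U$, write $G=\mathrm{Mon}_f\subseteq S_d$ for the image of $\pi_1(U,y_0)$ acting on the fibre $f^{-1}(y_0)$, and identify $f^{-1}(y_0)$ with $\{1,\dots,d\}$ so that $G$ becomes a subgroup of $S_d$.

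First I would observe that, by the very definition of $U$, the restriction $f^{-1}(U)\to U$ is finite \'etale of degree $d$. Since finite \'etale morphisms are stable under base change and fibre product, the $s$-fold fibre product $f^{-1}(U)\times_U\cdots\times_U f^{-1}(U)\to U$ is again finite \'etale, and removing the big diagonal (a closed subscheme) leaves the open subscheme $X^{(s)}_U$, which is therefore still finite \'etale over $U$. Its fibre over $y_0$ is exactly the set of ordered $s$-tuples of pairwise distinct points of $f^{-1}(y_0)$, i.e.\ the set of injections $\{1,\dots,s\}\hookrightarrow\{1,\dots,d\}$, and $\pi_1(U,y_0)$ acts on this fibre through the diagonal action induced by $G$ on $\{1,\dots,d\}$.

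Next I would apply the correspondence between finite \'etale covers of a connected base and finite $\pi_1$-sets: the connected components of the cover are in bijection with the orbits of $\pi_1$ on a chosen fibre. To convert connected components into irreducible components I would first replace $U$ by a dense open smooth (equivalently normal) locus, which is harmless since irreducibility is tested on any dense open and the branch locus is closed; then the \'etale cover $X^{(s)}_U$ is itself smooth, and a smooth variety is irreducible precisely when it is connected, so its irreducible components coincide with its connected components and hence with the $G$-orbits on the fibre. Therefore $X^{(s)}_U$ is irreducible if and only if $G$ acts transitively on the set of ordered $s$-tuples of distinct elements of $\{1,\dots,d\}$.

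Finally, transitivity of $G$ on ordered $s$-tuples of distinct points is, by definition, exactly $s$-transitivity of $G$ as a subgroup of $S_d$, which yields the stated equivalence in both directions at once. The step I would be most careful with is the passage from connectedness to irreducibility of the cover: this genuinely requires the base to be normal, so that a connected \'etale cover of it is irreducible, which is why I would restrict to the smooth locus of $U$ at the outset and note that this modification changes none of the varieties up to a dense open and hence preserves irreducibility.
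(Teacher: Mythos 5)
Your argument is correct: the paper does not prove this lemma but simply cites \cite[Proposition 2]{sottileyahl}, and your covering-space argument (the fibre of $X^{(s)}_U$ over $y_0$ is the set of ordered $s$-tuples of distinct sheets with the diagonal $\pi_1(U,y_0)$-action, connected components of an \'etale cover of a normal base correspond to $\pi_1$-orbits, hence irreducibility is equivalent to transitivity on such tuples, i.e.\ to $s$-transitivity of $\mathrm{Mon}_f$) is precisely the standard proof given in that reference. The only point worth flagging is that with the paper's definition of $U$ the map $f^{-1}(U)\to U$ is \'etale but not automatically finite of degree $d$ (a fibre could drop points without ramification when $f$ is merely generically finite), so one may need to shrink $U$ once more to get an honest $d$-sheeted cover --- but, exactly as in your reduction to the smooth locus, passing to a dense open of $U$ changes neither the irreducible components of $X^{(s)}_U$ nor the monodromy group, so this is harmless.
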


We note the well-known fact that a subgroup of $S_d$ which contains a transposition and that is $2$-transitive must be
the whole $S_d$.

\begin{proposition}
    If $X$ is a general cubic threefold, the Galois group $G$ of $\bL \to X$ is $S_6$.
\end{proposition}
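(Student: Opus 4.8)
The plan is to show that the monodromy group $G \subseteq S_6$ of the degree $6$ cover $p:\bL \to X$ is both $2$-transitive and contains a transposition, whence $G = S_6$ by the well-known fact recalled just above. The two ingredients come from the two classical lemmas: Lemma \ref{lem:harris} supplies the transposition once we exhibit a fibre with a single ramification point of index two and all other points unramified, while the $2$-transitivity will be extracted from an irreducibility statement via Lemma \ref{lem:montrans}.

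First I would produce the transposition. By Theorem \ref{thm:t2bir}, on a general cubic threefold $X$ the map $p:\bL_2 \to W$ is birational, so over a general point $x$ of the divisor $W \subset X$ there passes exactly one second type line. Since $\bL_2$ is precisely the non-smooth locus of $p$ (Proposition \ref{prop:Wdiv}(3)) and $p$ has degree six, the fibre $p^{-1}(x)$ over such a general $x \in W$ consists of this one second type line counted with multiplicity two together with four further reduced (first type) lines. Concretely, this is the statement that over a general branch point the cover has a unique ramification point of index two and is unramified elsewhere, which is exactly the hypothesis of Lemma \ref{lem:harris}. Hence $G$ contains a transposition. I would note that this uses the genuinely stronger birationality of Theorem \ref{thm:t2bir} rather than merely the multiplicity-two statement of Clemens--Griffiths (cf.\ Remark \ref{rem:err}).

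Next I would establish $2$-transitivity. By Lemma \ref{lem:montrans} with $s=2$, it suffices to prove that $\bL^{(2)}_U$, the complement of the diagonal in the fibre product $\bL \times_U \bL$ over the \'etale locus $U \subset X$, is irreducible. Geometrically $\bL^{(2)}_U$ parametrises ordered pairs of distinct lines through a common general point $x \in X$; equivalently, after removing the diagonal, it is an open subset of the incidence variety of pairs of distinct lines through a common point of $X$. The natural thing is to recognise this as a fibration over $X$ (or over $\bL$) with fibre an open subset of the curve $C_\ell$ of lines meeting a fixed line $\ell$, and to deduce irreducibility of the total space from irreducibility of $\bL$ (equivalently of $F$) together with irreducibility and connectedness of the generic fibre. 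Since $F$ is smooth and irreducible for any smooth cubic threefold, and the fibres are the (generically irreducible) curves governing the degree five cover $\pi_\ell:C_\ell \to \ell$, one assembles irreducibility of $\bL^{(2)}_U$ from these pieces.

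The main obstacle I expect is precisely this $2$-transitivity step: one must verify that the incidence variety of pairs of distinct concurrent lines is irreducible, which requires controlling the generic fibre $C_\ell$ carefully (its irreducibility, and that no component of the pair-space gets trapped over a proper subvariety of $X$). The transposition half is essentially immediate from the results already proved, so the argument stands or falls on cleanly organising the fibration and invoking a standard ``irreducible base plus irreducible generic fibre implies irreducible total space'' principle. Once both properties are in hand, the cited group-theoretic fact forces $G = S_6$, completing the proof.
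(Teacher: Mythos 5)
Your proposal is correct and follows essentially the same route as the paper: the transposition comes from Theorem \ref{thm:t2bir} combined with the identification of $\bL_2$ as the ramification locus and Lemma \ref{lem:harris}, and $2$-transitivity comes from the irreducibility of $\bL^{(2)}_U$, which the paper also establishes by fibring it over $F$ (through $\bL_U$) with generic fibre the irreducible curve $C_\ell$ and invoking Lemma \ref{lem:montrans}. The only detail the paper adds that you gesture at but do not pin down is why $C_\ell$ is irreducible (its class in $F$ is ample) and why the fibration argument applies (the map $\bL^{(2)}_U\to F|_U$ is flat and open, being a composition of a finite map with the $\PP^1$-bundle $\bL|_U\to F|_U$).
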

\begin{proof}
    Since $\bL=\PP(\cU|_F)$ is the projectivised universal bundle, it is irreducible and hence $G$ is a transitive subgroup
    of $S_6$ from Lemma \ref{lem:montrans}. If $G$ is 2-transitive and contains a transposition, it must be the whole $S_6$,
    so we now show each of these facts.

    First we show that $G$ contains a transposition. From Theorem \ref{thm:t2bir}, a general second type line
    $\ell\subset X$ counts with multiplicity two (out of the six) at a general point on it, and there are four other
    distinct type one lines through that point, i.e., there is a simply ramified fibre of $\bL\to X$, since from
    \ref{prop:Wdiv} the ramification locus of $\bL\to X$ is precisely the universal second type
    locus. Now Lemma \ref{lem:harris} implies that $G$ contains a transposition.

    Finally, we prove that $G$ is 2-transitive. Note that over a point $[\ell]$, the morphism $\bL\times_X \bL\to F$ has
    fibre $C_{\ell}$ the curve of lines meeting $\ell$. For any $X$, the general $C_{\ell}$ is a curve, and is
    irreducible as the class of $C_\ell$ is ample. Restricting to the open $U\subset X$ over which $\bL\to X$ is \'etale
    (note this also removes any Eckardt points, above which the fibre is infinite), we obtain a flat morphism
    $\bL^{(2)}_U\to F|_U$, since it a composition of the finite (hence flat) morphism $\bL^{(2)}_U\to\bL_U$ and the flat
    morphism $\bL|_U\to F|_U$. As the base and the general fibre are irreducible and the morphism is open, so too is the
    total space $\bL^{(2)}_U$ irreducible, from an elementary lemma
    \cite[\href{https://stacks.math.columbia.edu/tag/004Z}{Tag 004Z}]{stacks-project}. We conclude from Lemma
    \ref{lem:montrans}.  
\end{proof}

Consider next the natural morphism 
\begin{align*}
\pi_\ell:C_{\ell}&\to\ell\cong\PP^1 \\
[\ell'] &\mapsto\ell\cap\ell'.
\end{align*}
As for any smooth cubic and general line $\ell\subset X$ the variety $C_{\ell}$ is a curve, and there are six
lines through a point, the degree of this morphism is five. Analogously, 
if $\ell$ is a second type line, as it counts with multiplicity at least two as one of the six, we obtain an induced
degree 4 morphism $\pi_\ell:C_\ell\to\ell$.

\begin{proposition}\label{prop:gpcurve}
    Let $X$ be a general cubic threefold. If $\ell$ is a general line in $X$, then the $\mathrm{Mon}_{\pi_\ell}\cong
    S_5$. If $\ell$ a general line of second type, then $\mathrm{Mon}_{\pi_\ell}\cong S_4$.
\end{proposition}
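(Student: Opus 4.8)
The plan is to establish both monodromy statements by the same two-pronged strategy used in the previous proposition: show that the relevant monodromy group contains a transposition (via Lemma \ref{lem:harris}), and show that it is $2$-transitive (via Lemma \ref{lem:montrans}), whence it must be the full symmetric group $S_5$, respectively $S_4$. First I would treat the general line $\ell$, where $\pi_\ell: C_\ell\to\ell$ has degree $5$. For $2$-transitivity, I would apply Lemma \ref{lem:montrans} with $s=2$: the space $(C_\ell)^{(2)}_U$ of pairs of distinct lines meeting $\ell$ at distinct points fibres naturally over the locus of lines $\ell$, and I would argue that the total space is irreducible because the base ($F$, or a suitable open in it) is irreducible and the generic fibre is, using the openness/irreducibility criterion \cite[\href{https://stacks.math.columbia.edu/tag/004Z}{Tag 004Z}]{stacks-project} exactly as in the degree $6$ case. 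For the transposition, I would seek a point $p_0\in\ell$ over which exactly one of the five lines is ramified simply; this should come from the same special cubic $X_0$ and the explicit configuration analysed in Lemma \ref{lem:ram3fold}.

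For the transposition in the degree $5$ case, the key input is the geometry of ramification of $\pi_\ell$. The ramification of $\pi_\ell$ at a point $[\ell']\in C_\ell$ occurs precisely when $\ell'$ is tangent to the ruled structure, i.e.\ when the intersection point $\ell\cap\ell'$ is a point at which two of the five lines through it come together; concretely these are the intersections of $C_\ell$ with $F_2$ inside $F$. I would use Lemma \ref{lem:ram3fold}: for a general second type line there is a point through which pass exactly one further second type line and two first type lines, and more relevantly the semicontinuity of the ramification index established there gives that the generic ramification of $\pi_\ell$ over its branch divisor is simple. Transporting this to the general first type line $\ell$ (the branch locus of $\pi_\ell$ is a divisor by purity, and simple ramification is the generic behaviour), I obtain a fibre with a single simple ramification point, so Lemma \ref{lem:harris} applies and $\mathrm{Mon}_{\pi_\ell}$ contains a transposition. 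Combined with $2$-transitivity this forces $\mathrm{Mon}_{\pi_\ell}\cong S_5$.

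For the second type line $\ell$, where $\pi_\ell:C_\ell\to\ell$ has degree $4$, I would run the identical argument but now feed in the refined statement of Lemma \ref{lem:ram3fold} directly: that lemma exhibits, on the special cubic $X_0$, a second type line $\ell_0$ and a point $p_0\in\ell_0$ through which pass exactly one further second type line $\ell_0'$ and two first type lines, with the ramification of $\pi_{\ell_0}$ at $[\ell_0']$ being simple. By the semicontinuity argument in that lemma this simple ramification persists generically over $\cF_2$, so there is a fibre of $\pi_\ell$ with a unique simple ramification point, giving a transposition in $\mathrm{Mon}_{\pi_\ell}\subseteq S_4$ via Lemma \ref{lem:harris}. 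For $2$-transitivity I again apply Lemma \ref{lem:montrans} with $s=2$ to the universal family $\mathcal{C}\to\cF_2$ of Lemma \ref{lem:ram3fold}, checking irreducibility of the associated space of ordered pairs of distinct lines meeting a fixed second type line at distinct points. A subgroup of $S_4$ that is $2$-transitive and contains a transposition is all of $S_4$, completing the proof.

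The main obstacle I anticipate is the irreducibility step underlying $2$-transitivity, rather than the existence of the transposition, which is essentially handed to us by Lemma \ref{lem:ram3fold}. Concretely, one must verify that the incidence variety of ordered pairs of distinct lines $(\ell',\ell'')$ each meeting $\ell$, and meeting $\ell$ at distinct points, is irreducible over the (irreducible) base; the danger is that this space could break into components distinguished by some discrete geometric invariant of the pair. I would handle this by presenting the space as a fibration whose generic fibre is visibly irreducible (e.g.\ using the ampleness of the class of $C_\ell$ to control $C_\ell$ and then slicing off one marked point), and then invoking the openness of the structure morphism together with \cite[\href{https://stacks.math.columbia.edu/tag/004Z}{Tag 004Z}]{stacks-project}; in the second type case the same must be checked for the smaller curve, where $C_\ell$ now has degree $4$ over $\ell$, and care is needed because $\ell$ ranges only over the lower-dimensional locus $F_2$.
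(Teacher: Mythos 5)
Your handling of the transpositions is essentially the paper's: for a general second type line you feed Lemma \ref{lem:ram3fold} into Lemma \ref{lem:harris} directly, and for a general first type line the unique simple ramification point over a general branch point really comes from Theorem \ref{thm:t2bir} (a general point of $W$ lies on exactly one second type line, which counts with multiplicity two among the six lines through it); routing this through the semicontinuity argument inside Lemma \ref{lem:ram3fold}, which concerns the degree-$4$ family over $\cF_2$, is more roundabout but the content is the same. The genuine gap is the $2$-transitivity step, which you flag as the main obstacle but do not resolve. The argument ``the total space is irreducible because the base is irreducible and the generic fibre is'' is circular here: the generic fibre of the universal family of pair-spaces over the locus of $\ell$'s \emph{is} $(C_\ell)^{(2)}_U$, i.e.\ exactly the object whose irreducibility Lemma \ref{lem:montrans} requires. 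Nor does the re-fibration trick from the degree-$6$ proposition transfer: there $\bL\times_X\bL$ re-fibres over $F$ with irreducible one-dimensional fibres $C_\ell$, whereas $C_\ell\times_\ell C_\ell\setminus\Delta$ re-fibred over $C_\ell$ (your ``slicing off one marked point'') has finite, hence reducible, fibres, so the criterion of \cite{stacks-project} gives nothing.

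For the degree-$5$ statement the gap is harmless because $2$-transitivity is not needed: $5$ is prime, so a transitive subgroup of $S_5$ is primitive, and a primitive group containing a transposition is the full symmetric group; the paper concludes exactly this way from irreducibility of $C_\ell$ alone, and you should do the same. For the degree-$4$ statement, however, transitivity plus a transposition does \emph{not} force $S_4$: the dihedral group $D_4\subset S_4$ is transitive and contains a transposition. So here something beyond irreducibility of $C_\ell$ (which is \cite[Lemma 3.3]{irrgk}) genuinely is required, and your instinct to aim for $2$-transitivity is the right one --- indeed the paper's own final sentence rests on the claim that $S_4$ is the only transitive subgroup of $S_4$ containing a transposition, which is false as stated because of $D_4$. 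Since you have not actually produced a proof of irreducibility of $C_\ell\times_\ell C_\ell\setminus\Delta$ for a general second type $\ell$, nor an alternative route to primitivity (e.g.\ ruling out the block system of $D_4$ by exhibiting suitable ramification or an element of order $3$), the $S_4$ case remains open in your write-up.
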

\begin{proof}
    A general $\ell$ of first type will meet a general line of second type $\ell’$ at a general
    point $p$ of $\ell’$ (more specifically, outside the finite locus of points through which there pass more second type
    lines). This means from Theorem \ref{thm:t2bir} that $C_\ell\to \ell$ is simply ramified at
    $[\ell']$ over $p$ and there are no further ramification points, hence the Galois group has a transposition from
    Lemma \ref{lem:harris}. Transitivity in $S_5$ follows since $C_\ell$ is irreducible for general $\ell$ on any $X$.
    Since $S_5$ is the only subgroup of $S_5$ which has a transposition and is transitive, we conclude the result.

    To prove the second claim, note that Lemma \ref{lem:ram3fold} gives that there is always a transposition in the
    monodromy group. Again, as $C_\ell$ is irreducible from \cite[Lemma 3.3]{irrgk} and $S_4$ is the only transitive
    subgroup of $S_4$ which contains a transposition, we obtain the result.
\end{proof}


\begin{thebibliography}{{Sta}21}

\bibitem[AV08]{amerikvoisin}
Ekaterina Amerik and Claire Voisin.
\newblock Potential density of rational points on the variety of lines of a
  cubic fourfold.
\newblock {\em Duke Math. J.}, 145(2):379--408, 2008.
\newblock \href {https://doi.org/10.1215/00127094-2008-055}
  {\path{doi:10.1215/00127094-2008-055}}.

\bibitem[CCS97]{ccs}
G.~Canonero, M.~V. Catalisano, and M.~E. Serpico.
\newblock Inflection points of cubic hypersurfaces.
\newblock {\em Boll. Un. Mat. Ital. B (7)}, 11(1):161--185, 1997.

\bibitem[CMP19]{charlesmongardipacienza}
Francois Charles, Giovanni Mongardi, and Gianluca Pacienza.
\newblock Families of rational curves on holomorphic symplectic varieties and
  applications to zero-cycles, 2019.
\newblock URL: \url{https://arxiv.org/abs/1907.10970}, \href
  {https://doi.org/10.48550/ARXIV.1907.10970}
  {\path{doi:10.48550/ARXIV.1907.10970}}.

\bibitem[CG72]{cg}
C.~Herbert Clemens and Phillip~A. Griffiths.
\newblock The intermediate {J}acobian of the cubic threefold.
\newblock {\em Ann. of Math. (2)}, 95:281--356, 1972.
\newblock URL: \url{https://doi.org/10.2307/1970801}.

\bibitem[Ful98]{fulton}
William Fulton.
\newblock {\em Intersection theory}, volume~2 of {\em Ergebnisse der Mathematik
  und ihrer Grenzgebiete. 3. Folge. A Series of Modern Surveys in Mathematics
  [Results in Mathematics and Related Areas. 3rd Series. A Series of Modern
  Surveys in Mathematics]}.
\newblock Springer-Verlag, Berlin, second edition, 1998.
\newblock \href {https://doi.org/10.1007/978-1-4612-1700-8}
  {\path{doi:10.1007/978-1-4612-1700-8}}.

\bibitem[GK19]{irrgk}
Frank Gounelas and Alexis Kouvidakis.
\newblock Measures of irrationality of the {F}ano surface of a cubic threefold.
\newblock {\em Trans. Amer. Math. Soc.}, 371(10):7111--7133, 2019.
\newblock \href {https://doi.org/10.1090/tran/7565}
  {\path{doi:10.1090/tran/7565}}.

\bibitem[GK20]{cubicfourfolds2}
Frank {Gounelas} and Alexis {Kouvidakis}.
\newblock {On some invariants of cubic fourfolds}.
\newblock {\em arXiv e-prints}, page arXiv:2008.05162, August 2020.
\newblock \href {http://arxiv.org/abs/2008.05162} {\path{arXiv:2008.05162}}.

\bibitem[GK22]{cubicfourfolds1}
Frank {Gounelas} and Alexis {Kouvidakis}.
\newblock {Geometry of lines on a cubic fourfold}.
\newblock {\em IMRN (to appear)}, 2023.
\newblock \href {http://arxiv.org/abs/2109.08493} {\path{arXiv:2109.08493}}.

\bibitem[Har79]{harrisgalois}
Joe Harris.
\newblock Galois groups of enumerative problems.
\newblock {\em Duke Math. J.}, 46(4):685--724, 1979.
\newblock URL: \url{http://projecteuclid.org/euclid.dmj/1077313717}.

\bibitem[Huy23]{huybrechts}
Daniel Huybrechts.
\newblock {\em The geometry of cubic hypersurfaces (to appear)}.
\newblock 2023.
\newblock URL: \url{http://www.math.uni-bonn.de/people/huybrech/Notes.pdf}.

\bibitem[Liu02]{liu}
Qing Liu.
\newblock {\em Algebraic geometry and arithmetic curves}, volume~6 of {\em
  Oxford Graduate Texts in Mathematics}.
\newblock Oxford University Press, Oxford, 2002.
\newblock Translated from the French by Reinie Ern\'{e}, Oxford Science
  Publications.

\bibitem[OSY19]{osy}
Georg Oberdieck, Junliang Shen, and Qizheng Yin.
\newblock Rational curves in holomorphic symplectic varieties and
  {G}romov-{W}itten invariants.
\newblock {\em Adv. Math.}, 357:106829, 8, 2019.
\newblock \href {https://doi.org/10.1016/j.aim.2019.106829}
  {\path{doi:10.1016/j.aim.2019.106829}}.

\bibitem[Rou09]{roulleauelliptic}
Xavier Roulleau.
\newblock Elliptic curve configurations on {F}ano surfaces.
\newblock {\em Manuscripta Math.}, 129(3):381--399, 2009.
\newblock \href {https://doi.org/10.1007/s00229-009-0264-5}
  {\path{doi:10.1007/s00229-009-0264-5}}.

\bibitem[{Sta}21]{stacks-project}
The {Stacks project authors}.
\newblock The stacks project.
\newblock \url{https://stacks.math.columbia.edu}, 2021.

\bibitem[SY21]{sottileyahl}
Frank Sottile and Thomas Yahl.
\newblock Galois groups in enumerative geometry and applications, 2021.
\newblock URL: \url{https://arxiv.org/abs/2108.07905}, \href
  {https://doi.org/10.48550/ARXIV.2108.07905}
  {\path{doi:10.48550/ARXIV.2108.07905}}.

\end{thebibliography}

\end{document}